\newtheorem{thm}{Theorem}[section]
\newtheorem{cor}[thm]{Corollary}
\newtheorem{lem}[thm]{Lemma}
\theoremstyle{definition}
\newtheorem{exm}[thm]{Example}
\theoremstyle{plain}
\newtheorem{prop}[thm]{Proposition}
\theoremstyle{definition}
\newtheorem{defn}[thm]{Definition}
\theoremstyle{remark}
\newtheorem{rem}[thm]{\bf Remark}
\numberwithin{equation}{section}
\newcommand{\Tor}{\operatorname{Tor}}
 \DeclareMathOperator{\Ext}{Ext}
 \DeclareMathOperator{\gldim}{gl.dim}
 \DeclareMathOperator{\Hom}{Hom}
 \DeclareMathOperator{\injdim}{inj.dim}
 \DeclareMathOperator{\projdim}{proj.dim}
 \DeclareMathOperator{\RHom}{\mathbf{RHom}}
 \newcommand{\otimesL}{\otimes^{\bf L}}
\newcommand{\defCategory}[2]{
  \newcommand{#1}{#2\defvariable}}
\newcommand{\defvariable}[2][]{
\if\relax\detokenize{#1}\relax  
\if\relax\detokenize{#2}\relax
    \else  ({#2})  \fi
    \else  ^{{\rm #1}}({#2})  \fi}
\defCategory{\C}{\mathscr{C}}
\defCategory{\K}{\mathscr{K}}
\defCategory{\D}{\mathscr{D}}
 \def\Kb#1{\K[b]{#1}}
 \def\Db#1{\D[b]{#1}}
\def\Dsg#1{\mathscr{D}_{\sf sg}(#1)}
\xdef\csname cal\x\endcsname{\noexpand\ensuremath{\noexpand\mathcal{\x}}}
\xdef\csname scr\x\endcsname{\noexpand\ensuremath{\noexpand\mathscr{\x}}}
\xdef\csname bb\x\endcsname{\noexpand\ensuremath{\noexpand\mathbb{\x}}}}
\def\modcat#1{{#1}\mbox{-}{\sf mod}}
\def\Modcat#1{{#1}\mbox{-}{\sf Mod}}
\def\pmodcat#1{{#1}\mbox{-}{\sf proj}}
\def\Pmodcat#1{{#1}\mbox{-}{\sf Proj}}
\def\GPmodcat#1{{#1}\mbox{-}{\sf Gproj}}
\def\imodcat#1{{#1}\mbox{-}{\sf inj}}
\newcommand{\lra}{\longrightarrow}
\newcommand{\ra}{\rightarrow}
\newcommand{\cpx}[1]{#1^{\bullet}}
\newcommand{\opp}{^{\rm op}}
\title{Singular equivalences and Auslander-Reiten conjecture}
\author {Yiping Chen, Wei Hu, Yongyun Qin, Ren Wang}
\date{\today}
\begin{document}

\maketitle


\begin{abstract}
Auslander-Reiten conjecture, which says that an Artin algebra does not have any non-projective generator with vanishing self-extensions in all positive degrees, is shown to be invariant under certain singular equivalences induced by adjoint pairs, which occur often in matrix algebras, recollements and change of rings. Accordingly, several reduction methods are established to study this conjecture.
\end{abstract}

\section{Introduction}

In the representation theory of algebras, the celebrated  Nakayama conjecture ({\bf NC} for short) states that
an Artin algebra $A$ is self-injective provided that all terms in a minimal injective
resolution of $A$ are projective. In \cite{Muller1968aa}, M\"{u}ller restated the Nakayama conjecture and proved that the Nakayama conjecture holds for all algebras if and only if for each  algebra $A$, a generator-cogenerator $M$ of $A$ with $\Ext_{A}^{i}(M, M)$ vanishing for all $i>0$ is necessarily projective.  In \cite{Auslander1975ae}, Auslander and Reiten proposed a conjecture which is an analogue of M\"{u}ller' theorem.

\medskip
  {\bf Auslander-Reiten conjecture} ({\bf ARC}):  {\em A generator (may not be a cogenerator) $M$ of $A$ with  $\Ext_{A}^{i}(M, M)=0$ for all $i>0$ must be projective, or equivalently, a finitely generated $A$-module $X$ satisfying $\Ext_A^i(X, X\oplus A)=0$ for all $i>0$ is necessarily projective.}

\medskip
 ARC is closely connected with NC and several famous conjectures in the representation theory of Artin algebras. For example, let $A$ be an Artin algebra.

\medskip
{\bf Finitistic dimension conjecture(FDC)}: {\em The finitistic dimension of $A$ (the supremum of the projective dimensions of finitely generated $A$-modules with finite projective dimension) is finite. }

\smallskip
{\bf Strong Nakayama conjecture(SNC)}: {\em  For each nonzero finitely generated $A$-module $M$, there is an integer $n\geqslant 0$ such that $\Ext^n_A(M, A)\neq 0$ } \cite{CF1990}.

\smallskip
{\bf Generalized Nakayama conjecture(GNC)}: {\em  Every indecomposable injective $A$-module occurs as a direct summand of a term in a minimal injective resolution of ${}_AA$} \cite{Auslander1975ae}.

\medskip
 All these conjectures are widely open. The relationship among them is stated as follows. For more details, we refer the readers to \cite{Auslander1975ae}, \cite[Theorem 3.4.3]{Yamagata1996}.
 \begin{itemize}
 	\setlength{\itemsep}{0em}
 \item For an individual algebra,  FDC$\Rightarrow$ SNC$\Rightarrow$ GNC $\Rightarrow$ NC.

 \item
GNC holds for all Artin algebras if and only if so does ARC for all Artin algebras.
 \end{itemize}
 Thus, FDC holds for all algebras implies that ARC holds for all algebras.
 However, the implication is unknown for individual algebras. For example, the
 finitistic dimension conjecture is known true for
 self-injective algebras, but ARC is still open for these algebras \cite[Theorem 3.4]{Hoshino1984}.

ARC is known to be true for several special classes of algebras. For example, algebras of finite representation type, torsionless-finite algebras, symmetric biserial algebras, algebras with radical square zero, local algebras with radical cube zero \cite{Auslander1975ae, Xu2013a, Xu2015}.  ARC also holds for an algebra satisfying the ({\bf{Fg}}) condition \cite{EHTSS2004}. This kinds of algebras include group algebras of finite groups \cite{Carlson1983}, finite group schemes \cite{Friedlander1997b}, commutative complete intersections \cite{Avramov2000a}, quantum complete intersections where $q_{i,j}$ are roots of unity \cite{Oppermann2010}, and so on.

\medskip
This article is devoted to studying ARC in the context of singularity categories of algebras. The singularity category  $\Dsg{A}$ of an algebra $A$  is defined as the Verdier quotient
$$\Dsg{A}=\Db{\modcat{A}}/\Kb{\pmodcat{A}}$$
of the bounded derived category $\Db{\modcat{A}}$ by the subcategory of bounded complexes of projective modules \cite{Buchweitz1986}. Two algebras are called {\em singularly equivalent} if their singularity categories are equivalent as triangulated categories.

It is well-known that there is a full embedding
$$\underline{{}^{\perp}A}\hookrightarrow \Dsg{A}$$
sending a module to the corresponding stalk complex concentrated in degree zero,
where $${}^{\perp}A:=\{X\in\modcat{A}|\Ext_A^i(X, A)=0\mbox{ for all }i>0\}$$
 and $\underline{{}^{\perp}A}$ is the additive quotient category of ${}^{\perp}A$ modulo projective modules. The embedding induces an isomorphism
$$\Ext_A^i(X, Y)\cong \Hom_{\Dsg{A}}(X, Y[i])$$
for each $X, Y\in {}^{\perp}A$ and for each $i>0$.

ARC holds for $A$ precisely means that  $\underline{{}^{\perp}A}$ has no nonzero objects $X$ with $\Ext_A^i(X, X)=0$ for all positive $i$. Recall that an object $T$
in a triangulated category $\mathcal{T}$ is {\em presilting} if
$\Hom_{\mathcal{T}}(T, T[i])=0$ for all $i>0$.
As a consequence,
if $\Dsg{A}$ has no nonzero presilting objects, then ARC holds for $A$.
Therefore, it is natural to conjecture:
{\it The singularity category of any algebra contains no nonzero presilting objects}.  We call this singular presilting conjecture (SPC). Observe that  SPC implies  ARC, and
the converse is also true if $A$ is a Gorenstein algebra --- the above embedding is an equivalence in this case.
Hence SPC might be
used as a tool to study ARC.  Obviously, SPC is invariant under singular equivalences. This implies that ARC is preserved under singular equivalences
between Gorenstein algebras. One can ask a more general question.

\medskip
\noindent
{\em{\bf Question:}  Do singular equivalences preserve ARC?}

\medskip
If two algebras are derived equivalent (they are certainly also singularly equivalent), then the answer to the above question is yes. This was proved by Wei in \cite{Wei2012aa} (see \cite{Pan2013ac,Diveris2012aa} for derived invariance of a generalized version of ARC).  In this paper, we shall consider singular equivalences induced by adjoint pairs and show that many singular equivalences do preserve ARC.

\medskip
 Our first result is the following, which is listed in Theorem~\ref{ARE}.
\begin{thm}\label{thm-adjoint}
Let $A$, $B$ be two algebras. Suppose that there are triangle functors
\[\xymatrix{
	\D{\Modcat{B}}\ar[rr]^{G}\ar@/_2pc/[rr]_{K}\ar@/^2pc/[rr]^{L}&&\D{\Modcat{A}}\ar@/^1pc/[ll]^{H}\ar@/_1pc/[ll]_{F}&& }\]
between the unbounded derived categories of $A$ and $B$ such that $(L, F), (F, G), (G, H)$ and $(H, K)$ are all adjoint pairs. Assume that $G$ induces a singular equivalence, and that $H$ preserves bounded complexes of projective modules. Then the ARC holds for $A$ if and only if it holds for $B$.
\end{thm}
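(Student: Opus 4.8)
The strategy is to track the key module $X$ witnessing a failure of ARC through the chain of adjoint functors and show that presilting objects in $\Dsg{A}$ and $\Dsg{B}$ correspond. Concretely, suppose ARC fails for $A$; then $\underline{{}^{\perp}A}$ contains a nonzero $X$ with $\Ext_A^i(X,X)=0$ for all $i>0$, which via the full embedding $\underline{{}^{\perp}A}\hookrightarrow\Dsg{A}$ becomes a nonzero presilting object of $\Dsg{A}$. The first step is to argue that the functors $F,G,H,K,L$ descend to the singularity categories. This is where the hypothesis "$H$ preserves bounded complexes of projective modules" enters: a triangle functor between unbounded derived categories that sends $\Db{\modcat{-}}$ to $\Db{\modcat{-}}$ and $\Kb{\pmodcat{-}}$ to $\Kb{\pmodcat{-}}$ induces a triangle functor on Verdier quotients, i.e. on singularity categories. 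By hypothesis $G$ already induces a singular equivalence; I would verify that $F$ and $H$ (being adjoint to $G$ on one side or the other, hence "close" to $G$) also restrict appropriately — in particular that $F=H$ up to the singular equivalence, since an adjoint of an equivalence is a quasi-inverse, so on the singularity-category level $\bar F$ and $\bar H$ are both quasi-inverse to $\bar G$.

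**Carrying it out.** With $\bar G\colon\Dsg{B}\to\Dsg{A}$ an equivalence of triangulated categories, any presilting object is carried to a presilting object: $\Hom_{\Dsg{A}}(\bar G Y,\bar G Y[i])\cong\Hom_{\Dsg{B}}(Y,Y[i])$ for all $i$. So if $\Dsg{A}$ has a nonzero presilting object $P$, then $\bar G^{-1}P=\bar H P$ (or $\bar F P$) is a nonzero presilting object of $\Dsg{B}$. The remaining — and genuinely substantive — step is to pass from "$\Dsg{B}$ has a nonzero presilting object" back to "ARC fails for $B$", i.e. to produce an honest non-projective $B$-module $Y$ with $\Ext_B^i(Y,Y\oplus B)=0$ for all $i>0$. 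Since $B$ need not be Gorenstein, the embedding $\underline{{}^{\perp}B}\hookrightarrow\Dsg{B}$ is not essentially surjective, so a general presilting object of $\Dsg{B}$ need not lie in the image of ${}^{\perp}B$. The repair is to use that the presilting object of $\Dsg{B}$ we obtained is not arbitrary: it is $\bar H$ (or $\bar F$) applied to the image of a module $X\in{}^{\perp}A$. I would compute, using the adjunctions $(F,G),(G,H),(H,K)$ at the level of unbounded derived categories, that $HX$ (an object of $\Db{\modcat B}$) has the required $\Ext$-vanishing against $B$ itself: $\Hom_{\D{\Modcat B}}(HX, B[i])\cong\Hom_{\D{\Modcat A}}(X, (\text{right adjoint of }H\text{'s relevant extension})[i])$, and the right adjoint $K$ of $H$ sends $B$ (equivalently the bounded complexes of projectives) to something with no positive self-extensions against $X$ because $X\in{}^{\perp}A$ and $H$ preserves $\Kb{\pmodcat{-}}$. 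So after replacing $HX$ by a suitable truncation / a module $Y$ that represents the same object of $\Dsg{B}$, one gets $Y\in{}^{\perp}B$ with $\Ext_B^i(Y,Y)=0$ for all $i>0$ and $Y$ non-projective (non-projectivity because $\bar H P\neq 0$ in $\Dsg{B}$).

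**The main obstacle.** The hard part is precisely this last passage: turning an object of $\Dsg{B}$ with good Hom-vanishing into an actual module in ${}^{\perp}B$ with good $\Ext$-vanishing, because the embedding is not an equivalence on the $B$-side. Managing this requires: (i) checking that the functors restrict to $\Db{\modcat{-}}$, not merely $\D{\Modcat{-}}$ (so one can talk about finitely generated modules and cohomology in bounded range), which is where one uses that adjoints of functors preserving compact objects behave well and that $H$ preserves bounded complexes of projectives; (ii) extracting from a bounded complex representing $\bar H P$ a single module, via the standard syzygy/cosyzygy truncation trick that does not change the object in $\Dsg{B}$ and does not create positive self-$\Ext$ (one absorbs the projective tails); and (iii) verifying the $\Ext_B^i(Y,B)=0$ half, which I expect to follow from the identity $\Hom_{\Dsg{B}}(\bar H P, (B\text{-projective})[i])$ vanishing via the adjunction $(H,K)$ together with $K$ preserving (co)bounded complexes of projectives — or, more cleanly, from the observation that $\bar F$ being the other quasi-inverse lets one symmetrise the argument. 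The symmetry of the hypotheses (four adjoint pairs arranged cyclically) is there to make both the "$A\Rightarrow B$" and "$B\Rightarrow A$" directions run by the same argument with the roles of the functors permuted, so once one direction is done carefully the other is immediate.
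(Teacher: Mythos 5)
Your reduction to the singularity category correctly isolates the hard point, but the resolution you sketch for it does not follow from the hypotheses, and this is a genuine gap. You want to show that the specific presilting object $\bar H(X)$ (or $\bar F(X)$) of $\Dsg{B}$ is represented by a module $Y\in{}^{\perp}B$, and for the crucial half $\Ext^i_B(Y,B)=0$ you invoke the adjunction $(H,K)$: indeed $\Hom_{\D{\Modcat{B}}}(H(X),B[i])\cong\Hom_{\D{\Modcat{A}}}(X,K(B)[i])$, but to conclude vanishing for all $i>0$ you would need serious control over $K(B)$ --- e.g.\ that it is isomorphic to a complex of projectives concentrated in non-positive degrees --- and the theorem assumes nothing about $K$ beyond its existence. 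Note that $X\in{}^{\perp}A$ only kills maps into projectives sitting in non-positive degrees: even for the honest perfect complex $G(B)\in\Kb{\pmodcat{A}}$ one gets $\Hom_{\D{\Modcat{A}}}(X,G(B)[i])=0$ only for $i\gg 0$, not for all $i>0$, so the naive adjunction computation fails already at the derived-category level without a shift normalization. This is precisely why the paper runs your direction through the \emph{left} adjoint $F$ of $G$ rather than through $H$: the right adjoint of $F$ is $G$ itself, which does preserve $\Kb{\Pmodcat{A}}$ (it preserves $\Kb{\pmodcat{A}}$ and coproducts by Lemma \ref{re1}), and the seemingly superfluous functor $L$ is exactly what makes $F$ non-negative up to shift (Lemma \ref{non}); the analogous argument for $H$ would need $K$ to preserve $\Kb{\Pmodcat{A}}$, which is neither assumed nor deducible.

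The second half of the gap is your step (ii)--(iii): even granting Hom-vanishing against $B$ for the complex $H(X)$, the assertion that a syzygy/cosyzygy truncation representing the same object of $\Dsg{B}$ ``does not create positive self-Ext'' is not a standard trick but the substantive content of the non-negative/stable functor machinery of \cite[Propositions 4.8 and 5.2]{Hu2017aa} (Proposition \ref{cd} here): one must first know the module representative lies in ${}^{\perp}(\Pmodcat{B})$ before $\Hom_{\Dsg{B}}$-vanishing translates into genuine $\Ext$-vanishing, and that is exactly what non-negativity of the functor plus a right adjoint preserving $\Kb{\Pmodcat{}}$ buys. The paper's architecture also runs in the opposite direction to yours: it never lifts an object of the singularity category back to a module, but instead pushes the hypothetical self-orthogonal module forward along the module-level stable functor $\overline{G}$ (resp.\ $\overline{F}$) into the algebra where ARC is assumed, concludes it vanishes there, and then uses full faithfulness of the induced functor on singularity categories (Lemma \ref{cdar}, Corollary \ref{cd1}). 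To complete your proposal you would essentially have to reprove Proposition \ref{cd} for the functor $F$, at which point you have reproduced the paper's proof; as written, the key step rests on an unsupported property of $K$ and an unproved truncation claim.
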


Applying Theorem \ref{thm-adjoint} to
ladders (see Section 3),
we get the following theorem (see Theorem~\ref{lad}).
\begin{thm}\label{lad-thm}
Let $A$, $B$ and $C$ be three algebras, and there is a ladder of height $3$
\[\xymatrix{
	\D{\Modcat{B}}\ar@/^1pc/[rr]|{i_*}\ar@/_1pc/[rr]|{i_?}&&
\D{\Modcat{A}} \ar[ll]|{i^!}\ar@/_2pc/[ll]|{i^*}\ar@/^2pc/[ll]
\ar@/^1pc/[rr]|{j^*}\ar@/_1pc/[rr]|{j^?}&&
\D{\Modcat{C}}\ar[ll]|{j_*}\ar@/_2pc/[ll]|{j_!}\ar@/^2pc/[ll]
}.\]

\medskip

\noindent
Then ARC holds for $A$ implies that it holds for $B$.
Moreover, if the ladder can be completed to a ladder of height $4$,
then the following statements hold.
\begin{enumerate}
	\item[{\rm (1).}] If the ARC holds on $A$, then it holds on $B$ and $C$;	
	\item[{\rm (2).}] If $C$ (resp. $B$) has finite global dimension, then ARC holds on $A$ if and only if it holds on $B$ (resp. $C$).
\end{enumerate}
\end{thm}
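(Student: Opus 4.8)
The plan is to deduce everything from Theorem~\ref{thm-adjoint} by unpacking what a ladder of height $3$ (resp. $4$) gives us. Recall that a recollement of derived categories provides six functors; a ladder of height $3$ means this recollement extends one step upward and one step downward, so we have a diagram in which $i^*$ has a left adjoint and $j_!$ has a left adjoint as well. Concretely, writing the functors between $\D{\Modcat{A}}$ and $\D{\Modcat{B}}$ in the ladder of height $3$, we obtain a chain of adjoint pairs $(i^{(1)}, i^*), (i^*, i_*), (i_*, i^!)$, where $i^{(1)}$ is the new top functor; and the recollement tells us that $i_* = i_?$ is fully faithful with $\D{\Modcat{B}}$ identified, via $i_*$, with a thick subcategory of $\D{\Modcat{A}}$ whose Verdier quotient is $\D{\Modcat{C}}$. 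The point is that the composite $i^! i_*$ (or equivalently the identity on $\D{\Modcat{B}}$) is not what induces a singular equivalence directly; rather, one should look at the functor $j_! j^*$ or, better, follow the recipe of Theorem~\ref{thm-adjoint} with the roles $G = i^!$ (restricted suitably), and check that its left adjoint tower $(L, F), (F, G), (G, H)$ matches $(\text{top of ladder}, i_*, i^!, i^{(1)})$ read off from the height-$4$ completion.

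First I would set up, for the height-$3$ ladder, the four functors needed to invoke Theorem~\ref{thm-adjoint} for the pair $(A, B)$: take $G := i_*$ (which is a fully faithful triangle functor), $F := i^*$, $H := i^!$, and $L := $ the extra left adjoint of $i^*$ provided by the third rung. Then $(L, F), (F, G), (G, H)$ are adjoint pairs by construction, and I must verify (a) that $G = i_*$ induces a singular equivalence and (b) that $H = i^!$ preserves $\Kb{\pmodcat{-}}$. For (a): since $i_*$ is fully faithful and the recollement exhibits $\D{\Modcat{B}}$ as a Verdier quotient-complement, $i_*$ sends $\Kb{\pmodcat{B}}$ into $\Kb{\pmodcat{A}}$ provided $i_*$ has a bounded-complex-of-projectives-preserving left adjoint, which $i^*$ is (a left adjoint of a derived functor between module categories preserves compact objects, hence perfect complexes); then one checks $i_*$ descends to a fully faithful functor on singularity categories, and essential surjectivity follows because the complementary quotient $\D{\Modcat{C}}$ is "absorbed" — this is precisely where the recollement structure, not merely the adjunctions, is used. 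For (b): $i^! \cong j_! j^* [\,?\,]$-type identities, or directly, $i^!$ being a right adjoint in a recollement of module categories, it has a left adjoint $i_*$ which preserves compactness, so $i^!$ preserves $\Kb{\pmodcat{-}}$ — this is a standard fact I would cite from the ladder/recollement literature. Having checked (a) and (b), Theorem~\ref{thm-adjoint} yields: ARC holds for $A$ iff it holds for $B$ — in particular the asserted implication "ARC on $A$ $\Rightarrow$ ARC on $B$" (and in fact the converse too, already at height $3$, once the hypotheses check out; if some hypothesis only holds at height $4$ I would state the one-directional version at height $3$).

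Next, for the height-$4$ statements, I would run the same machine with the roles of $B$ and $C$ swapped, using the other half of the recollement. A ladder of height $4$ gives, on the $C$-side, enough adjoints to produce functors $L', F', G', H'$ between $\D{\Modcat{A}}$ and $\D{\Modcat{C}}$ with $G' := j_!$ (fully faithful, since $j_!$ is a section of $j^*$ in the recollement) fitting into the required tower of four adjoint pairs $(L', F'), (F', G'), (G', H')$, where these are read off as (new rung), $j^*$, $j_!$, (another new rung); the height-$4$ hypothesis is exactly what guarantees all four of these adjoints exist. Then I verify that $G' = j_!$ induces a singular equivalence (same argument: $j_!$ fully faithful, its left adjoint — a shift of $j^*$ — preserves perfect complexes, and the complementary quotient is $\D{\Modcat{B}}$, absorbed by the recollement), and that the relevant $H'$ preserves bounded complexes of projectives. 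Theorem~\ref{thm-adjoint} then gives ARC on $A$ iff ARC on $C$, proving (1) (combined with the $B$-statement from the previous paragraph). For (2), if $C$ has finite global dimension then $\D{\Modcat{C}} = \Kb{\pmodcat{C}}$, so the recollement degenerates and $i_*$ becomes a singular \emph{equivalence} (the quotient it kills is trivial in the singularity category), whence $\Dsg{A} \simeq \Dsg{B}$ and ARC transfers in both directions even without the full strength of Theorem~\ref{thm-adjoint}; symmetrically for $B$ of finite global dimension.

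The main obstacle I expect is step (a) — showing the candidate functor $G$ (namely $i_*$ or $j_!$) genuinely induces a \emph{singular equivalence}, i.e. both that it descends to the singularity categories (needing the left adjoint to preserve perfect complexes, which is routine) and, more delicately, that the descended functor is \emph{essentially surjective}. Fully faithfulness of $i_*$ on $\D{\Modcat{A}}$ does not automatically survive passage to the Verdier quotient by perfect complexes, and essential surjectivity requires showing that every object of $\Dsg{A}$ is, up to the relation generated by triangles and perfect complexes, in the image of $i_*$; this is where one must genuinely exploit that the complementary piece of the recollement is the derived category of an algebra ($C$, resp. $B$) together with the gluing triangles — and if that algebra happens to have finite global dimension, the complementary piece is perfect and the argument becomes immediate, which is also why part (2) is the cleanest case. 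Matching the abstract four-adjoint-pair hypothesis of Theorem~\ref{thm-adjoint} to the concrete rungs of the ladder (getting the indices and the directions of the adjunctions to line up exactly) is the other bookkeeping point that needs care but presents no conceptual difficulty.
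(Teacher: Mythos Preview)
Your plan has a genuine gap at step (a): the functor $i_*$ does \emph{not} induce a singular equivalence in the height-$3$ situation, nor does $j_!$ (or $j_*$) in the height-$4$ situation for part (1). The recollement descends to a recollement of singularity categories (under suitable hypotheses), so the image of $\tilde{i}_*:\Dsg{B}\to\Dsg{A}$ has complementary quotient $\Dsg{C}$, which is nonzero unless $C$ has finite global dimension. Your intuition that ``the complementary quotient $\D{\Modcat{C}}$ is absorbed'' fails at the level of singularity categories: what gets absorbed is only $\Kb{\pmodcat{C}}$, not all of $\Db{\modcat{C}}$. You correctly flag essential surjectivity as the obstacle, but it is not an obstacle to overcome---it is a genuine obstruction, and the strategy of invoking Theorem~\ref{thm-adjoint} (which requires a singular \emph{equivalence}) cannot work here.

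The paper's route is different and weaker in its hypotheses: for the height-$3$ implication and for part (1), it only uses that $i_*$ (resp.\ $j_*$) induces a \emph{fully faithful} functor on singularity categories---a consequence of full faithfulness at the derived level plus preservation of $\Kb{\pmodcat{-}}$ and $\Db{\modcat{-}}$ (Orlov). This, together with a non-negativity argument and the existence of a right adjoint preserving $\Kb{\Pmodcat{-}}$, is enough to push self-orthogonal objects in $\underline{{}^\perp B}$ forward to self-orthogonal objects in $\underline{{}^\perp A}$ and conclude the one-way implication; no essential surjectivity is needed. Your instinct to use Theorem~\ref{thm-adjoint} is too strong a hammer for the first statement and for (1). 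For part (2) your idea is right---finite global dimension of $C$ kills $\Dsg{C}$ and then $i_*$ \emph{is} a singular equivalence---but your final sentence ``ARC transfers in both directions even without the full strength of Theorem~\ref{thm-adjoint}'' is exactly what is \emph{not} known (this is the open Question in the introduction); the paper does invoke the full adjoint-tuple machinery (height $4$ supplies enough rungs) to get the biconditional.
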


Theorem~\ref{thm-adjoint} can also be applied to singular equivalences
induced by tensor functors. The following result is listed in Theorem~\ref{atar}.
\begin{thm}\label{atar1}
Suppose that $\cpx{X}$ is a $B$-$A$-bimodules complex which is perfect over $B$ and $A$,
and assume $\cpx{Y}:= \RHom_B(\cpx{X},B)$ is a perfect complex of $A$-modules. If $\cpx{X}\otimesL_A-$ induces singular equivalences between $A$ and $B$, then ARC holds for $B$ implies that it holds for $A$. If moreover
$\RHom_A(\cpx{Y}, A)\in \Kb{\pmodcat{B}}$, then ARC holds for $A$ if and only if it holds for $B$.
\end{thm}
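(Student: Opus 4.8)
The plan is to realise the functor $\cpx{X}\otimesL_A-$ together with its iterated left and right adjoints as an instance of the configuration of Theorem~\ref{thm-adjoint}; the ``if and only if'' assertion will then come straight from that theorem, and the one-directional assertion from the part of its argument that needs only the adjoint pair $(F,G)$.

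First I would assemble the adjoint chain. Write $\cpx{Y}=\RHom_B(\cpx{X},B)$, an $A$-$B$-bimodule complex, and $\cpx{Z}=\RHom_A(\cpx{Y},A)$, a $B$-$A$-bimodule complex, and put $F:=\cpx{X}\otimesL_A-\colon\D{\Modcat{A}}\to\D{\Modcat{B}}$. Since $\cpx{X}$ is perfect over $A$ one has $F\cong\RHom_A(\RHom_A(\cpx{X},A),-)$, so $F$ has a left adjoint $L:=\RHom_A(\cpx{X},A)\otimesL_B-$. Since $\cpx{X}$ is perfect over $B$, the right adjoint of $F$ is $G:=\RHom_B(\cpx{X},-)\cong\cpx{Y}\otimesL_B-\colon\D{\Modcat{B}}\to\D{\Modcat{A}}$. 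Since $\cpx{Y}$ is perfect over $A$, the right adjoint of $G$ is $H:=\RHom_A(\cpx{Y},-)\cong\cpx{Z}\otimesL_A-$, and $H$ has a further right adjoint $K:=\RHom_B(\cpx{Z},-)$. Thus $(L,F),(F,G),(G,H),(H,K)$ are adjoint pairs of triangle functors between the unbounded derived categories, exactly as required by Theorem~\ref{thm-adjoint}. Because $G(B)\cong\cpx{Y}$ is perfect over $A$, the functor $G$ maps $\Kb{\pmodcat{B}}$ into $\Kb{\pmodcat{A}}$; and because $F=\cpx{X}\otimesL_A-$ induces the given singular equivalence $\Dsg{A}\to\Dsg{B}$ while $(F,G)$ is an adjoint pair, $G$ descends to a quasi-inverse of that equivalence, so $G$ itself induces a singular equivalence.

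For the ``if and only if'' it then suffices to remark that $H(A)\cong\cpx{Z}=\RHom_A(\cpx{Y},A)$, so $H$ maps $\Kb{\pmodcat{A}}$ into $\Kb{\pmodcat{B}}$ precisely when $\RHom_A(\cpx{Y},A)\in\Kb{\pmodcat{B}}$ --- which is the extra hypothesis. Under it all hypotheses of Theorem~\ref{thm-adjoint} are in force, and that theorem gives ARC for $A$ $\Longleftrightarrow$ ARC for $B$. For the one-directional statement the condition on $H$ is unavailable, so instead I would run the part of the argument behind Theorem~\ref{thm-adjoint} that uses only $(F,G)$ and the standard description of the singularity category. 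After replacing $\cpx{X}$ by a shift, assume $\cpx{Y}=G(B)$ is concentrated in nonpositive degrees. Let $X\in\modcat{A}$ be non-projective with $\Ext_A^i(X,X\oplus A)=0$ for all $i>0$, so $X\in{}^{\perp}A$; since $\cpx{X}$ is perfect over $A$, the complex $F(X)=\cpx{X}\otimes_A X$ is a bounded complex of $B$-modules, and the adjunction $(F,G)$ yields, for $i>0$,
\[\Hom_{\D{\Modcat{B}}}(F(X),B[i])\cong\Hom_{\D{\Modcat{A}}}(X,\cpx{Y}[i])=0,\]
the last equality because $X\in{}^{\perp}A$ and $\cpx{Y}$ is a bounded complex of projectives in nonpositive degrees; thus $\Ext_B^{>0}(F(X),B)=0$. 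Representing $F(X)$ in $\Dsg{B}$ by a sufficiently high syzygy module $M$, so that $\Ext_B^{>0}(M,B)$ still vanishes (a routine dimension shift), one gets $M\in{}^{\perp}B$; moreover $M$ is non-projective, for otherwise $F(X)\cong 0$ in $\Dsg{B}$, hence $X\cong 0$ in $\Dsg{A}$, so $X$ has finite projective dimension and, lying in ${}^{\perp}A$, is projective, a contradiction; and $\Ext_B^i(M,M)\cong\Hom_{\Dsg{B}}(M,M[i])\cong\Hom_{\Dsg{A}}(X,X[i])\cong\Ext_A^i(X,X)=0$ for all $i>0$. So $M$ contradicts ARC for $B$, which is the contrapositive of ``ARC for $B$ implies ARC for $A$''.

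The main obstacle will be bookkeeping of two kinds. One is checking that each iterated adjoint of $\cpx{X}\otimesL_A-$ really is a derived tensor or Hom functor attached to $\cpx{X}$, $\cpx{Y}$ or $\cpx{Z}$ (and their one-sided duals) --- this is exactly where the three perfectness hypotheses get consumed, one at a time --- and that these functors restrict to the singularity categories, so that the singular equivalence induced by $\cpx{X}\otimesL_A-$ propagates along the whole chain. The other, in the one-directional part, is passing from the \emph{complex} $F(X)$ to an \emph{honest} module lying in ${}^{\perp}B$, i.e. keeping $\Ext_B^{>0}(-,B)$ zero under the move to a high syzygy. Everything else is formal manipulation of the adjunctions and of the singular equivalence, and that work is carried out once and for all inside Theorem~\ref{thm-adjoint}.
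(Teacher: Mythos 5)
Your proposal is correct, and for the biconditional it is essentially the paper's own argument: both you and the paper use the three perfectness hypotheses to assemble the adjoint chain $(L,\;\cpx{X}\otimesL_A-,\;\cpx{Y}\otimesL_B-,\;\RHom_A(\cpx{Y},-),\;K)$, observe that $\cpx{Y}\otimesL_B-$, being right adjoint to $\cpx{X}\otimesL_A-$ on the level of singularity categories, is a quasi-inverse of the given singular equivalence, note that $\RHom_A(\cpx{Y},-)$ preserves $\Kb{\sf Proj}$ precisely under the extra hypothesis $\RHom_A(\cpx{Y},A)\in\Kb{\pmodcat{B}}$, and then quote Theorem~\ref{ARE} (Theorem~\ref{thm-adjoint}). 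Where you genuinely diverge is the one-directional claim: the paper just applies Corollary~\ref{cd1} to the adjoint triple $(L,\,\cpx{X}\otimesL_A-,\,\cpx{Y}\otimesL_B-)$ with the roles of $A$ and $B$ interchanged, so that the transfer of a self-orthogonal module is delegated to the non-negative-functor machinery of Proposition~\ref{cd}, whereas you transport a putative counterexample $X\in{}^{\perp}A$ by hand through the adjunction $(\cpx{X}\otimesL_A-,\RHom_B(\cpx{X},-))$, using $\Hom_{\D{\Modcat{B}}}(F(X),B[i])\cong\Hom_{\D{\Modcat{A}}}(X,\cpx{Y}[i])=0$ and a syzygy/dimension-shift to land in ${}^{\perp}B$. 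Your hand-made route works and is more elementary and self-contained (it re-proves the special case of Corollary~\ref{cd1} that is needed here), at the cost of redoing bookkeeping the paper has already packaged once and for all. One sentence should be phrased more carefully: in $\Dsg{B}$ one can in general only write $F(X)\cong M[m]$ for a high syzygy module $M$ and some shift $m>0$, not $F(X)\cong M$ itself; this is harmless for your argument, since your dimension shift gives $\Ext_B^{>0}(M,B)=0$ for the syzygy module directly, and self-orthogonality, non-projectivity and the comparison with $\Hom_{\Dsg{A}}(X,X[i])$ are insensitive to the shift, but as stated the sentence claims slightly more than the syzygy construction provides. The remaining bookkeeping you flag (identifying the iterated adjoints as tensor/Hom functors attached to $\cpx{X}$, $\cpx{Y}$, $\cpx{Z}$ and checking they restrict to $\Db{\sf mod}$ and $\Kb{\sf proj}$) is exactly what the paper verifies via Lemma~\ref{re1}.
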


Our result can be applied to give several reduction methods on ARC. Let $A$ be a lower triangular matrix, and $e$ be an idempotent. Under certain conditions, $A$ satisfies ARC if and only if so does $eAe$ (see Corollary \ref{tri-alg}). As an immediate application, ARC is preserved under one-point (co-)extensions (see Example \ref{one-point extensions}). Thus, for quiver algebras, ARC is invariant when we remove a sink or a source. Let $AeA$ be a hereditary ideal of $A$ such that $eA$ has finite injective dimension as a right $A$-module. We also show that ARC holds for $A$ if and only if it holds for $A/AeA$ (see Example \ref{hereditary ideals}).

\medskip
This paper is organized as follows. We collect necessary facts in Section 2, and investigate ARC under singular equivalences induced by adjoint pairs and recollements in Section 3 and 4. Some examples are given in the final section.

\section{Preliminaries}
In this section we fix our notation and recall some basic facts for later proofs.

As we mentioned at the beginning of the previous section, all algebras are finite dimensional algebras over a fixed field $k$.  Let $A$ be such an algebra. The opposite algebra of $A$ is denoted by $A\opp$ and the enveloping algebra $A\otimes_k A\opp$ is denoted by $A^e$. We identify $A$-$A$-bimodules with left $A^e$-modules.
 We denote by $\Modcat{A}$ the category of left $A$-modules, by $\modcat{A}$ its subcategory consisting of finitely generated left $A$-modules, by $\Pmodcat{A}$ its subcategory consisting of projective left $A$-modules, and by $\pmodcat{A}$ its subcategory consisting of finitely generated projective left $A$-modules.

Let $\mathscr{X}$ be a full subcategory of $\Modcat{A}$. If $\mathscr{X}$ is not contained in $\modcat{A}$, then we define
$${}^{\perp}\mathscr{X}:=\{Y\in\Modcat{A}\mid \Ext_{A}^{i}(Y, X)=0, \forall i>0, X\in\mathscr{X}\},$$
otherwise, we write
$${}^{\perp}\mathscr{X}:=\{Y\in\modcat{A}\mid \Ext_{A}^{i}(Y, X)=0, \forall i>0, X\in\mathscr{X}\}.$$
The corresponding additive quotient modulo projectives is denoted by $\underline{{}^{\perp}\mathscr{X}}$.  An $A$-module $M$ contained in ${}^{\perp}M$ is called {\em self-orthogonal.}

\medskip
The following result is well-known.

\begin{lem}\label{fg}
Let $A$ be an algebra. Then \[{}^{\perp}(\Pmodcat{A})\cap \modcat{A}={}^{\perp}A.\]
\end{lem}
\begin{proof}
It is clear that the left hand is contained in the right hand. For any $X\in {}^{\perp}A\subseteq \modcat{A}$, one has
\[{\rm Ext}_A^i(X,\coprod\limits_{j\in I} A)\simeq \coprod\limits_{j\in I}{\rm Ext}_A^i(X,A)\]
for any index set $I$. Thus, $X\in {}^{\perp}(\Pmodcat{A}).$
\end{proof}

For an algebra $A$, the unbounded derived category of $\Modcat{A}$ is denoted by $\D{\Modcat{A}}$. As usual, we write $\Db{\Modcat{A}}$ (resp. $\Db{\modcat{A}}$) for the full subcategory consisting of bounded complexes
of left $A$-modules (resp. finitely generated left $A$-modules), and write $\Kb{\Pmodcat{A}}$ (resp. $\Kb{\pmodcat{A}}$) for the full subcategory consisting of bounded complexes of projective modules (resp. finitely generated projective modules).
 From \cite{Krause2005ab}, the ``big" singularity category
of $A$ is defined to be the following Verdier quotient
$$\Dsg{\Modcat{A}}=\Db{\Modcat{A}}/\Kb{\Pmodcat{A}}.$$
It is well-known that the above mentioned unbounded and bounded derived categories, and the singularity category are all triangulated categories. We refer to Happel's book \cite{Happel1988aa} for basic results on triangulated categories. For an object $X$ in a triangulated category, we write $X[n]$ for the object obtained from $X$ by applying the shift functor $n$ times.
The object $X$ is called {\em presilting} if $\Hom(X, X[n])=0$ for all $n>0$.

\medskip
Note that there is a full embedding
$$\underline{{}^{\perp}(\Pmodcat{A})}\hookrightarrow \Dsg{\Modcat{A}}$$
which induces a natural isomorphism
$$\Ext_{A}^{i}(X, Y)\cong \Hom_{\Dsg{\Modcat{A}}}(X, Y[i])$$
for each $i>0$. Thus, every self-orthogonal object in $\underline{{}^{\perp}(\Pmodcat{A})}$ is a presilting object in $\Dsg{\Modcat{A}}$.

A complex $\cpx{X}$ of (finitely generated) $A$-modules is a sequence of (finitely generated) $A$-module homomorphisms
\[\cpx{X}: \cdots\rightarrow X^i\overset{d^i}{\rightarrow} X^{i+1}\overset{d^{i+1}}{\rightarrow} X^{i+2}\rightarrow \cdots\]
such that $d^{i+1} d^i=0$ for all $i\in \mathbb{Z}$. The $i$-th homology of $\cpx{X}$ is denoted by $H^i(\cpx{X})$.
For a right $A$-module $M$ and a left $A$-module $N$, denote by $M\otimes_A \cpx{X}$ and $\Hom_A(\cpx{X},N)$ the complexes
\[\cdots\rightarrow M\otimes_A X^i\overset{1\otimes d^i}{\longrightarrow} M\otimes_A X^{i+1}\rightarrow \cdots\]
and
\[\cdots\rightarrow\Hom_A(X^{i+1},N)\overset{\Hom_A(d^i,N)}{\longrightarrow} \Hom_A(X^i,N)\rightarrow \cdots\]
respectively. Note that the $i$-th term of $\Hom_A(\cpx{X},N)$ is $\Hom_A(X^{-i},N)$ for all $i\in \mathbb{Z}$.

\medskip

The following  will be useful in our later discussion.

\begin{lem}
[{\cite[Theorem 4.1 and 5.1]{Neeman1996ab} and  \cite[Lemma 2.7]{Angeleri-Hugel2017aa}}]	\label{re1}
Let $A$ and $B$ be two  algebras, and $F :\D{\Modcat{A}}\rightarrow \D{\Modcat{B}}$ be a triangle functor with a right adjoint $G$.
Consider the following conditions

$(1)$ $F$ preserves $\Kb{\sf proj}$;

$(2)$ $G$ preserves coproducts;

$(3)$ $G$ admits a right adjoint;

$(4)$ $G$ preserves $\Db{\sf mod}$;

$(5)$ $G$ preserves $\Db{\sf Mod}$.

Then we have $(1)\Leftrightarrow (2)\Leftrightarrow (3)\Leftrightarrow (4)\Rightarrow(5)$.
\end{lem}
%
%

\section{Singular equivalences induced by adjoint tuples}
In this section, we consider ARC  and singular equivalences induced by adjoint pairs. First, let us recall from \cite{Hu2017aa} the definition of non-negative functors.

\begin{defn}[{\cite[Definition 4.1]{Hu2017aa}}]\label{def-non}
Let $A$ and $B$ be two algebras.   A triangle functor
$$G :\Db{\Modcat{B}}
\rightarrow \Db{\Modcat{A}}$$
 is called \emph{non-negative} if $G$ satisfies the following two conditions:
 \setlength{\itemsep}{0pt}
 \begin{enumerate}
 	\item[(1)] $G(X)$ is isomorphic to a complex with zero homology in all negative degrees for all $X\in \Modcat{B}$;
 	\item[(2)] $G(P)$ is isomorphic to a complex in $\Kb{\Pmodcat{A}}$ with zero terms in all negative degrees for all $P\in \Pmodcat{B}$.
 \end{enumerate}
 \end{defn}

\medskip

The following proposition taken from \cite{Hu2017aa} on non-negative functors will be crucial for our discussion.

\begin{prop}[{\cite[Proposition 4.8 and 5.2]{Hu2017aa}}]\label{cd}
	Let $G:\Db{\Modcat{B}}\ra\Db{\Modcat{A}}$
	be a non-negative triangle functor admitting a right adjoint $H$ which preserving $\Kb{\rm Proj}$. Then there is a  commutative diagram
	\begin{equation}\label{d}
\xymatrix@C=1.5cm{
	\underline{{}^{\perp}(\Pmodcat{B})} \ar[r]^{\overline{G}} \ar @{^{(}->}[d]^{\iota_B}
	& \underline{{}^{\perp}(\Pmodcat{A})} \ar @{^{(}->}[d]^{\iota_A}  \\
	\Dsg{\Modcat{B}}  \ar[r]^{\tilde{G}}   &\Dsg{\Modcat{A}},}
\end{equation}
	where $\iota_B$ and $\iota_A$ are natural embeddings.
\end{prop}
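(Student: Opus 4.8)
The plan is to first construct the induced functor $\tilde{G}$ on singularity categories and then to show that $\tilde{G}\circ\iota_B$ takes values, up to isomorphism, in the essential image of the full embedding $\iota_A$; granting this, the functor $\overline{G}$ and the commutativity of the square follow formally. For the first step, condition~(2) of Definition~\ref{def-non} gives $G(P)\in\Kb{\Pmodcat{A}}$ for every projective $B$-module $P$, and since $\Kb{\Pmodcat{B}}$ is the triangulated subcategory of $\Db{\Modcat{B}}$ generated by the projective modules and $G$ is a triangle functor, it follows that $G(\Kb{\Pmodcat{B}})\subseteq\Kb{\Pmodcat{A}}$. Hence, by the universal property of the Verdier quotient, $G$ descends to a triangle functor $\tilde{G}\colon\Dsg{\Modcat{B}}\to\Dsg{\Modcat{A}}$ commuting with the canonical localization functors.

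Now fix $X\in{}^{\perp}(\Pmodcat{B})$, viewed in $\Dsg{\Modcat{B}}$ through $\iota_B$; the heart of the argument is to produce an $A$-module $N\in{}^{\perp}(\Pmodcat{A})$ with $G(X)\cong\iota_A(N)$ in $\Dsg{\Modcat{A}}$. By condition~(1) of Definition~\ref{def-non}, $G(X)$ has homology concentrated in non-negative degrees; combining this with condition~(2) and a projective resolution of $X$, one represents $G(X)$ by a complex of projective $A$-modules in non-negative degrees which, when brutally truncated in a degree $k$ greater than the homological amplitude of $G(X)$, differs from a single module only by a bounded complex of projectives. Concretely, one obtains a triangle
\[I[-k]\lra G(X)\lra\cpx{P}\lra I[-k+1]\]
in $\Db{\Modcat{A}}$ with $I$ an $A$-module and $\cpx{P}\in\Kb{\Pmodcat{A}}$ concentrated in degrees $\geqslant 0$. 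Since $\cpx{P}\cong 0$ and $I[-k]\cong\Omega^{k}I$ (a $k$-th syzygy of $I$) in $\Dsg{\Modcat{A}}$, this gives $G(X)\cong\Omega^{k}I$ there. Carrying out this reduction --- that is, extracting the module $I$ and the perfect complex $\cpx{P}$ from $G(X)$ --- is where the precise meaning of ``non-negative'' is used, and I expect it to be the main obstacle; it is essentially the content of the theory of non-negative functors in \cite{Hu2017aa}.

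It then remains to verify $N:=\Omega^{k}I\in{}^{\perp}(\Pmodcat{A})$, i.e.\ $\Ext_A^{j}(\Omega^{k}I,Q')=0$ for all $j>0$ and all projective $A$-modules $Q'$; this is where the adjunction $(G,H)$ and the hypothesis that $H$ preserves $\Kb{\rm Proj}$ enter. For $i>0$ one has
\[\Hom_{\Db{\Modcat{A}}}(G(X),Q'[i])\cong\Hom_{\Db{\Modcat{B}}}(X,H(Q')[i]),\]
and $H(Q')\in\Kb{\Pmodcat{B}}$. Applying the same adjunction to a projective $B$-module $P$ yields $\Hom_{\Db{\Modcat{B}}}(P,H(Q')[l])\cong\Hom_{\Db{\Modcat{A}}}(G(P),Q'[l])=0$ for all $l>0$, because $G(P)$ is a bounded complex of projectives concentrated in degrees $\geqslant 0$; hence $H(Q')$ has homology in non-positive degrees, and a perfect complex with that property is isomorphic in $\Db{\Modcat{B}}$ to a bounded complex of projective $B$-modules placed in degrees $\leqslant 0$ (successively split off the surjections onto the top projective terms). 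Filtering $H(Q')$ by brutal truncation and using $X\in{}^{\perp}(\Pmodcat{B})$ then gives $\Hom_{\Db{\Modcat{B}}}(X,H(Q')[i])=0$, hence $\Hom_{\Db{\Modcat{A}}}(G(X),Q'[i])=0$, for all $i>0$. Applying $\Hom_{\Db{\Modcat{A}}}(-,Q'[i])$ to the triangle above and using $\Hom_{\Db{\Modcat{A}}}(\cpx{P},Q'[i])=0$ for $i>0$ (as $\cpx{P}$ lives in degrees $\geqslant 0$), we obtain $\Ext_A^{i+k}(I,Q')=0$ for all $i\geqslant 1$; a dimension shift then gives $\Ext_A^{j}(\Omega^{k}I,Q')=0$ for all $j\geqslant 1$, as required.

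Finally, since $\iota_A$ is fully faithful and $\tilde{G}\circ\iota_B$ has been shown to land, up to isomorphism, in its essential image, there is a unique functor $\overline{G}\colon\underline{{}^{\perp}(\Pmodcat{B})}\to\underline{{}^{\perp}(\Pmodcat{A})}$ with $\iota_A\circ\overline{G}\cong\tilde{G}\circ\iota_B$; it is automatically additive, its effect on morphisms being determined by the full faithfulness of $\iota_A$. This yields the commutative diagram~\eqref{d}.
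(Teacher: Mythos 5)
The paper itself offers no proof of this proposition --- it is quoted from Hu--Pan \cite[Propositions 4.8 and 5.2]{Hu2017aa}, and the only hint of the intended construction appears later, in the proof of Corollary~\ref{cd1}: $\overline{G}(X)$ is the cokernel of a map occurring in a projective resolution of $G(X)$. Your overall architecture (descend $G$ to $\tilde{G}$, show $\tilde{G}\iota_B$ lands in the essential image of $\iota_A$, then induce $\overline{G}$ formally from full faithfulness of $\iota_A$) is the right one, and your adjunction computation is correct and is exactly the non-formal input: $\Hom_{\Db{\Modcat{A}}}(G(X),Q'[i])\cong\Hom_{\Db{\Modcat{B}}}(X,H(Q')[i])=0$ for $i>0$, because $H(Q')\in\Kb{\Pmodcat{B}}$ has homology only in non-positive degrees, hence is homotopy equivalent to a complex of projectives in degrees $\leqslant 0$, and $X\in{}^{\perp}(\Pmodcat{B})$. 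The genuine gap is the step you yourself flag as the main obstacle: it is not only deferred to \cite{Hu2017aa}, it is mis-stated in a way the theory does not deliver. First, $G(X)$ cannot be represented by a bounded complex of projectives in non-negative degrees unless it already lies in $\Kb{\Pmodcat{A}}$, i.e.\ is zero in $\Dsg{\Modcat{A}}$. Second, truncating a (left) projective resolution of $G(X)$ produces a triangle in which the leftover module appears as the \emph{cone} (equivalently in non-positive shifts), not as a module $I$ placed in a high positive degree $k$ mapping \emph{into} $G(X)$ with a perfect cone in degrees $\geqslant 0$. Your asserted triangle $I[-k]\to G(X)\to\cpx{P}\to I[-k+1]$ would say that, modulo $\Kb{\Pmodcat{A}}$, $G(X)$ is a $k$-th syzygy, i.e.\ that $G(X)[k]$ is isomorphic to a module in $\Dsg{\Modcat{A}}$; nothing in the hypotheses (non-negativity of $G$, $H$ preserving $\Kb{\rm Proj}$) provides this cosyzygy-type condition, and the route you indicate to it (truncation) does not produce it.

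The repair is short and keeps your Ext argument intact, in fact simplifies it. Since $G$ is non-negative, $H^i(G(X))=0$ for $i<0$; taking a projective resolution $\cpx{R}\to G(X)$ and its good truncation, $G(X)$ is isomorphic in $\Db{\Modcat{A}}$ to a complex $0\to C\to Q^1\to\cdots\to Q^n\to 0$ with $C=\Coker(R^{-1}\to R^0)$ and all $Q^i$ projective, and brutal truncation gives a triangle $\cpx{P}\to G(X)\to C\to\cpx{P}[1]$ with $\cpx{P}\in\Kb{\Pmodcat{A}}$ concentrated in degrees $\geqslant 1$. Applying $\Hom_{\Db{\Modcat{A}}}(-,Q'[i])$ to this triangle, your vanishing $\Hom(G(X),Q'[i])=0$ together with $\Hom(\cpx{P}[1],Q'[i])=0$ (no overlapping degrees, and $\cpx{P}$ is homotopically projective) yields $\Ext_A^{i}(C,Q')=0$ for all $i>0$ and all projective $Q'$ directly --- no syzygies or dimension shifting needed --- so $C\in{}^{\perp}(\Pmodcat{A})$ and $\tilde{G}\iota_B(X)\cong\iota_A(C)$ in $\Dsg{\Modcat{A}}$. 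This is precisely Hu--Pan's cokernel description quoted in the paper. Your final formal step (inducing $\overline{G}$ from full faithfulness of $\iota_A$, with the square commuting up to natural isomorphism) is fine and suffices for the way the diagram is used in Lemma~\ref{cdar}.
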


In certain cases, the above commutative diagram indicates the relationship between the validity of ARC for  $A$ and $B$, as is shown in the the following lemma.

\begin{lem}\label{cdar}
	Assume that we have the above commutative diagram {\rm (\ref{d})}. If $\overline{G}$ preserves finitely generated modules, and $\tilde{G}$ is fully faithful, then ARC  holds for $A$ implies that  it holds for $B$.
\end{lem}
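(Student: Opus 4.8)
The goal is to show that if ARC holds for $A$, then ARC holds for $B$. By the reformulation given in the introduction, ARC holds for $B$ precisely when $\underline{{}^{\perp}B}$ has no nonzero self-orthogonal objects, and by Lemma~\ref{fg} we have ${}^{\perp}B = {}^{\perp}(\Pmodcat{B})\cap\modcat{B}$, so I would work with finitely generated modules throughout. The plan is to take a nonzero self-orthogonal module $X \in {}^{\perp}B$ (so $X\in {}^{\perp}(\Pmodcat{B})$ and $\Ext_B^i(X,X)=0$ for all $i>0$), push it through $\overline{G}$, and derive a contradiction with ARC for $A$ unless $X$ is projective.

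\textbf{Key steps.} First I would observe that $X$, viewed in $\underline{{}^{\perp}(\Pmodcat{B})}$, maps under $\overline{G}$ to $\overline{G}(X)\in\underline{{}^{\perp}(\Pmodcat{A})}$, which lies in $\modcat{A}$ by the hypothesis that $\overline{G}$ preserves finitely generated modules; hence (using Lemma~\ref{fg} again on the $A$-side) $\overline{G}(X)$ represents an object of $\underline{{}^{\perp}A}$. Second, I would use the commutative square (\ref{d}): since $\iota_A\circ\overline{G} = \tilde{G}\circ\iota_B$, and both $\iota_A$, $\iota_B$ are the canonical fully faithful embeddings inducing $\Ext^i \cong \Hom_{\Dsg{}}(-,-[i])$ for $i>0$, and $\tilde{G}$ is fully faithful by hypothesis, we get for all $i>0$
\[
\Ext_A^i\bigl(\overline{G}(X),\overline{G}(X)\bigr)\;\cong\;\Hom_{\Dsg{\Modcat{A}}}\bigl(\overline{G}(X),\overline{G}(X)[i]\bigr)\;\cong\;\Hom_{\Dsg{\Modcat{B}}}\bigl(X,X[i]\bigr)\;\cong\;\Ext_B^i(X,X)\;=\;0.
\]
So $\overline{G}(X)$ is a self-orthogonal object of $\underline{{}^{\perp}A}$. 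Third, invoking ARC for $A$, the object $\overline{G}(X)$ must be zero in $\underline{{}^{\perp}A}\subseteq\Dsg{\Modcat{A}}$, i.e. $\overline{G}(X)=0$ in $\Dsg{\Modcat{A}}$. Fourth, since $\tilde{G}$ is fully faithful (hence reflects zero objects) and $\iota_A$ is faithful, the equality $\iota_A\overline{G}(X)=\tilde{G}\iota_B(X)=0$ forces $\iota_B(X)=0$, i.e. $X=0$ in $\Dsg{\Modcat{B}}$; but $\iota_B$ is a full embedding, so $X=0$ in $\underline{{}^{\perp}(\Pmodcat{B})}$, meaning $X$ is projective. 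This contradicts the choice of $X$ as a nonzero (i.e. non-projective) generator, so no such $X$ exists and ARC holds for $B$.

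\textbf{Main obstacle.} The only genuinely delicate point is making sure the self-orthogonality transports cleanly through the diagram: one must check that the isomorphism $\Ext^i_A(-,-)\cong\Hom_{\Dsg{}}(-,-[i])$ on the $A$-side applies to $\overline{G}(X)$, which requires $\overline{G}(X)$ to genuinely sit in $\underline{{}^{\perp}(\Pmodcat{A})}\cap\modcat{A}$ — this is exactly where the hypothesis ``$\overline{G}$ preserves finitely generated modules'' is used, combined with Lemma~\ref{fg}. I also need to be careful that $\overline{G}$ is the functor between stable categories of perpendicular subcategories (not just the big singularity functor restricted), but this is precisely the content of Proposition~\ref{cd}, which is assumed. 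Everything else is formal category theory: fully faithful functors reflect isomorphisms and zero objects, and the embeddings $\iota_A,\iota_B$ are faithful. Thus the proof is essentially a diagram chase once the finiteness bookkeeping is in place. \s
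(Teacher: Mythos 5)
Your proposal is correct and follows essentially the same argument as the paper: take a self-orthogonal $X\in\underline{{}^{\perp}B}$, use Lemma~\ref{fg} together with the hypothesis that $\overline{G}$ preserves finitely generated modules to place $\overline{G}(X)$ in $\underline{{}^{\perp}A}$, transport self-orthogonality through the commutative square~(\ref{d}) using full faithfulness of $\tilde{G}$, apply ARC for $A$, and reflect the zero object back to conclude $X$ is projective. The only cosmetic difference is that the paper phrases the transport in terms of presilting objects and deduces full faithfulness of $\overline{G}$, while you chase the $\Ext$--$\Hom_{\Dsg{}}$ isomorphisms and reflect zero objects directly.
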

\begin{proof}
    Let $X$ be a self-orthogonal object in $\underline{{}^{\perp}B}$. Then $X$ is finitely generated and contained in $\underline{{}^{\perp}(\Pmodcat{B})}$ by Lemma \ref{fg}.  Since the embedding $\iota_{B}$  takes self-orthogonal objects to presilting objects, the object $\iota_{B}(X)$ and thus $\tilde{G}\iota_{B}(X)$ is presilting. By the commutative diagram (\ref{d}), one has $\iota_{A}\bar{G}(X)=\tilde{G}\iota_{B}(X)$. It follows that $\bar{G}(X)$ is self-orthogonal. By our assumption $\bar{G}(X)$ is still finitely generated and thus belongs to $\underline{{}^{\perp}A}$ by Lemma \ref{fg}.  By assumption ARC holds for $A$. This implies that $\bar{G}(X)$ is isomorphic to the zero object in  $\underline{{}^{\perp}A}$. Since $\tilde{G}$ is fully faithful, by the commutative diagram (\ref{d}), one can see that $\bar{G}$ is fully faithful. Hence $X$ must be isomorphic to the zero object in $\underline{{}^{\perp}B}$.

    Altogether, we have shown that $\underline{{}^{\perp}B}$ does not contain any nonzero self-orthogonal objects, that is, ARC holds for $B$.
\end{proof}

Lemma \ref{cdar} will serve as our main idea to study the relationship between singular equivalences and ARC. However, at this stage, we don't know how to get non-negative
functors that inducing singular equivalences or fully faithful functor between the singularity
categories.

If $G$ is a derived equivalence, then $G[i]$ is non-negative for some integer $i$. In general, the following lemma shows that adjoint pairs of triangle functors between derived categories may give rise to non-negative functors.

\begin{lem}\label{non}	
	Let
	\[\xymatrix{
		\D{\Modcat{B}}\ar@/^-0.8pc/[rr]^{G}&&\D{\Modcat{A}}\ar@/_0.8pc/[ll]_{F}&& }\]
	be an adjoint pair with both $F$ and $G$ preserving $\Kb{\sf proj}$. Then, up to shifts,  $G$ restricts to a non-negative functor  from $\Db{\Modcat{B}}$ to $\Db{\Modcat{A}}$.
\end{lem}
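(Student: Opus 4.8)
The plan is to produce the desired non-negativity by analysing where $G$ sends modules and projectives, using the adjunction $(F,G)$ together with the hypothesis that both $F$ and $G$ preserve $\Kb{\sf proj}$. Since $G$ preserves $\Kb{\sf proj}$, for every projective $P\in\Pmodcat{B}$ the complex $G(P)$ lies in $\Kb{\Pmodcat{A}}$; after replacing $G$ by a suitable shift $G[n]$ (which does not affect whether $G$ preserves $\Kb{\sf proj}$), I may assume that $G(B)$ is isomorphic to a complex in $\Kb{\Pmodcat{A}}$ concentrated in non-negative degrees, and that moreover the top nonzero degree is as small as possible. The content of condition (2) in Definition~\ref{def-non} then follows for an arbitrary projective $P$, since $P$ is a direct summand of a coproduct of copies of $B$ and $\Kb{\Pmodcat{A}}$-complexes concentrated in non-negative degrees are closed under summands and coproducts in the relevant sense.

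The main point is condition (1): I must show that for every module $X\in\Modcat{B}$, the complex $G(X)$ has vanishing homology in all negative degrees. First I would establish this for $X=B$ itself; but this is immediate from the normalisation above, since $G(B)\in\Kb{\Pmodcat{A}}$ is concentrated in non-negative degrees. Next, for an arbitrary module $X$, I would use a projective presentation, or better, the fact that any $X$ fits into triangles built from projectives together with the behaviour of $G$ on morphisms: the key computation is that for any two projectives $P,Q\in\Pmodcat{B}$ and any integer $i<0$,
\[
\Hom_{\D{\Modcat{A}}}\big(G(P),G(Q)[i]\big)\cong\Hom_{\D{\Modcat{B}}}\big(P,GF\,\dots\big)
\]
— more directly, using $(F,G)$ one has $\Hom(F G(P), Q[i])\cong\Hom(G(P),G(Q)[i])$, and since $FG(P)\in\Kb{\sf proj}$ one controls the degrees in which this can be nonzero. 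The cleanest route, though, is probably to argue homologically: $H^{-j}(G(X))$ for $j>0$ can be computed via $\Hom_{\D{\Modcat{A}}}(A, G(X)[-j])$ (or $\Hom(P_A, G(X)[-j])$ for $P_A$ a projective generator), which by adjunction equals $\Hom_{\D{\Modcat{B}}}(F(P_A), X[-j])$; now $F(P_A)\in\Kb{\sf proj}$ is a bounded complex of projectives, so $\Hom_{\D{\Modcat{B}}}(F(P_A), X[-j])$ is computed by applying $\Hom_B(F(P_A)^{\bullet}, X)$ and taking cohomology, and for $j$ large enough (larger than the width of $F(P_A)$) this vanishes. This shows $G(X)$ is bounded below with homology vanishing far enough in negative degrees; to get vanishing in \emph{all} negative degrees I shift $G$ once more so that the largest such $j$ becomes $0$, and I check this single shift is compatible with the shift already chosen for the projectives — this is possible precisely because $F(B)$ and $G(B)$ being in $\Kb{\sf proj}$ pins down a common finite range.

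The step I expect to be the main obstacle is reconciling the two shifts: the shift needed to push the homology of $G(X)$ (for $X$ a module) into non-negative degrees and the shift needed to push the terms of $G(P)$ (for $P$ projective) into non-negative degrees must be the \emph{same} shift for the conclusion ``$G$ restricts to a non-negative functor, up to shift'' to hold. I would handle this by first bounding everything in terms of the finitely many nonzero degrees of $G(B)$ and $F(A)$ (say $G(B)$ lives in degrees $[a,b]$ and the homology of $G(A)$-type objects in degrees $\ge a'$), showing $a=a'$ by a duality/adjunction symmetry, and then normalising by $[a]$ once and for all. The remaining verifications — that $G$ indeed restricts from $\D{\Modcat{B}}$ to $\Db{\Modcat{B}}\to\Db{\Modcat{A}}$ (which uses that $G$, having both a left adjoint $F$ and the preservation of $\Kb{\sf proj}$, preserves $\Db{\sf mod}$ by Lemma~\ref{re1}), and that the two defining conditions of Definition~\ref{def-non} hold after this single shift — are then routine.
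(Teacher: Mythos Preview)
Your approach is the same as the paper's: compute $H^i(G(X))$ as $\Hom_{\D{\Modcat{A}}}(A,G(X)[i])\cong\Hom_{\D{\Modcat{B}}}(F(A),X[i])$ via the adjunction, and use that $F(A)\in\Kb{\pmodcat{B}}$ to bound the degrees where this can be nonzero; separately handle condition~(2) using $G(B)\in\Kb{\pmodcat{A}}$ and the fact that $G$ preserves coproducts.

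The only place you make life harder than necessary is the ``reconciliation of shifts''. You do \emph{not} need to prove any equality $a=a'$. Write $G(B)$ in degrees $[-m,n]$ and $F(A)$ in degrees $[-r,s]$, and set $t=\max\{m,s\}$. Then $\hat G:=G[-t]$ sends $B$ to a complex in non-negative degrees (so condition~(2) holds for all $P\in\Pmodcat{B}$ via coproducts), and $\hat F:=F[t]$ sends $A$ to a complex in non-positive degrees; since $X$ is a module concentrated in degree~$0$, $\Hom_{\K{}}(\hat F(A),X[i])=0$ for every $i<0$, giving condition~(1) directly for all negative $i$, not just for $i\ll 0$. One shift, chosen as this maximum, does both jobs at once --- there is no second shift and nothing to reconcile. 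This is exactly how the paper argues.

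A small side remark: for the restriction $\Db{\Modcat{B}}\to\Db{\Modcat{A}}$ you want the implication $(1)\Rightarrow(5)$ of Lemma~\ref{re1} (preservation of $\Db{\sf Mod}$), not preservation of $\Db{\sf mod}$.
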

\begin{proof}
Since $F$ preserves $\Kb{\sf proj}$, it follows from  Lemma  \ref{re1} that $G$ preserves $\Db{\sf Mod}$ and coproducts.

By assumption both $G(B)$ and $F(A)$ are bounded complexes of finitely generated projectives. Assume that   $F(A)$ and $G(B)$  are of the following form:
\[G(B): \quad \cdots\rightarrow 0\rightarrow P^{-m}\rightarrow \cdots \rightarrow P^0\rightarrow P^1\rightarrow \cdots \rightarrow P^n\rightarrow 0\rightarrow \cdots, \]
\[F(A): \quad \cdots\rightarrow 0\rightarrow Q^{-r}\rightarrow \cdots \rightarrow Q^0\rightarrow Q^1\rightarrow \cdots \rightarrow Q^s\rightarrow 0\rightarrow \cdots.\]
Set $t={\rm max}\{m,s\}$, $\hat{F}=F[t]$ and $\hat{G}=G[-t]$. Then $\hat{G}(B)=G(B)[-t]$ is  a complex in $\Kb{\pmodcat{A}}$ with zero terms in all negative degrees, and $\hat{F}(A)=F[A][t]$ is  a complex in $\Kb{\pmodcat{B}}$ with zero terms in all positive degrees. Since $\hat{G}$ preserves coproducts, the complex $\hat{G}(P)$ is isomorphic to a complex in $\Kb{\Pmodcat{A}}$ with zero terms in all negative degrees for all $P\in \Pmodcat{B}$.

For any  $X\in \Modcat{B}$ and any integer $i$, we have
isomorphisms
\begin{align*}
	H^i(\hat{G}(X)) &\simeq \Hom_{\Db{\Modcat{A}}}(A,\hat{G}(X)[i])\\
	&\simeq \Hom_{\Db{\Modcat{B}}}(\hat{F}(A),X[i])\\
	&\simeq \Hom_{\Kb{\Modcat{B}}}(\hat{F}(A),X[i]),
	\end{align*}
where the second isomorphism follows from the adjointness of $\hat{F}$ and $\hat{G}$.
Therefore, $H^i(\hat{G}(X))=0$ for any $i<0$, that is, $\hat{G}(X)$ has no homology in negative degrees. This proves that $\hat{G}$ is non-negative.
\end{proof}

\begin{cor}\label{cd1}
Suppose that  $(F, G, H)$ in the following diagram is an adjoint triple of triangulated functors.
 \[\xymatrix{
\D{\Modcat{B}}\ar[rr]^{G}&&\D{\Modcat{A}}\ar@/^1pc/[ll]^{H}\ar@/_1pc/[ll]_{F}&& }\]
Assume that $G$ preserves $\Kb{\sf proj}$ and $H$ preserves $\Kb{\sf Proj}$.
Then we
have the commutative diagrams (\ref{d}).
Moreover, if $G$ induces a fully faithful functor between the singularity categories of $B$ and $A$, then ARC holds for $A$ implies that it holds for $B$.
\end{cor}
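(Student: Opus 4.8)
The plan is to deduce the statement from Lemma~\ref{cdar}: I need a commutative diagram of the shape~(\ref{d}) in which $\tilde G$ is fully faithful and $\overline G$ preserves finitely generated modules, and I will obtain such a diagram by promoting $G$ (after a shift) to a non-negative functor and then applying Proposition~\ref{cd}. The one genuinely useful observation is that the existence of the \emph{third} member $H$ of the adjoint triple is precisely what allows Lemma~\ref{re1} to be applied to the pair $(F,G)$.

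First I would apply Lemma~\ref{re1} to the adjoint pair $(F,G)$, with $F$ in the role of the left adjoint and $G$ in the role of its right adjoint. Since $G$ itself has a right adjoint, namely $H$, condition~$(3)$ of that lemma holds, hence so do $(1)$, $(4)$ and $(5)$; that is, $F$ preserves $\Kb{\sf proj}$ and $G$ preserves $\Db{\sf mod}$ and $\Db{\sf Mod}$. Applying Lemma~\ref{re1} once more, now to the pair $(G,H)$ (with $G$ the left adjoint and $H$ the right adjoint), the hypothesis that $G$ preserves $\Kb{\sf proj}$ forces $H$ to preserve $\Db{\sf mod}$ and $\Db{\sf Mod}$ as well. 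In particular $G$ and $H$ restrict to an adjoint pair between the bounded derived categories $\Db{\Modcat{B}}$ and $\Db{\Modcat{A}}$, and the restricted right adjoint still preserves $\Kb{\sf Proj}$ since $H$ does.

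Now $(F,G)$ is an adjoint pair in which both $F$ and $G$ preserve $\Kb{\sf proj}$, so Lemma~\ref{non} applies: after replacing $G$ by $G[-t]$ for a suitable integer $t$, the functor $G$ restricts to a \emph{non-negative} triangle functor $\Db{\Modcat{B}}\to\Db{\Modcat{A}}$. Shifting $G$ by $[-t]$ replaces its right adjoint $H$ by $H[t]$, which still preserves $\Kb{\sf Proj}$; thus all hypotheses of Proposition~\ref{cd} hold and we obtain the commutative diagram~(\ref{d}), which is the first assertion. Since passing from $G$ to $G[-t]$ changes the induced functor on singularity categories only by composing with the shift autoequivalence, the hypothesis ``$G$ induces a fully faithful functor between the singularity categories'' is equivalent to saying that the functor $\tilde G$ in~(\ref{d}) is fully faithful.

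It remains to check the two hypotheses of Lemma~\ref{cdar}. That $\tilde G$ is fully faithful is exactly the assumption, as just noted. For the other, take a finitely generated $X\in{}^{\perp}(\Pmodcat{B})$; by Lemma~\ref{fg} this is the same as $X\in{}^{\perp}B$. As $G$ preserves $\Db{\sf mod}$ we get $G(X)\in\Db{\modcat{A}}$, and $\overline G(X)$ is built from $G(X)$ by the truncation construction underlying Proposition~\ref{cd}, hence is again finitely generated; so $\overline G$ preserves finitely generated modules. Lemma~\ref{cdar} then gives precisely that ARC for $A$ implies ARC for $B$. The point requiring the most care is the bookkeeping --- tracking the shift $[-t]$ through all the adjunctions --- together with the slightly indirect step, via condition~$(3)$ of Lemma~\ref{re1}, needed to see that $F$ preserves $\Kb{\sf proj}$; once these are in hand, the proof is just the assembly of Lemmas~\ref{re1}, \ref{non}, \ref{fg}, \ref{cdar} and Proposition~\ref{cd}.
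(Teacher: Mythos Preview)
Your proof is correct and follows essentially the same route as the paper's: use Lemma~\ref{re1} (via the existence of $H$) to see that $F$ preserves $\Kb{\sf proj}$, invoke Lemma~\ref{non} to make $G$ non-negative up to shift, apply Proposition~\ref{cd} to obtain diagram~(\ref{d}), and finish with Lemma~\ref{cdar}. The paper's only substantive addition is that it pins down why $\overline G$ preserves finitely generated modules by citing \cite[Section~4.2]{Hu2017aa} explicitly (namely, $\overline G(X)$ is a cokernel of a map in a projective resolution of $G(X)$), where your ``truncation construction'' remark is a bit informal; conversely, your explicit check that $H$ restricts to $\Db{\sf Mod}$ is a detail the paper leaves implicit.
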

\begin{proof}
Since $G$ has a right adjoint $H$, by Lemma \ref{re1}, we have that
$F$ preserves $\Kb{\sf proj}$.
It follows from Lemma \ref{non} that, up to shifts,  $G$ restricts to a non-negative functor from $\Db{\Modcat{B}}$ to $\Db{\Modcat{A}}$. By Proposition \ref{cd}, one gets the commutative diagram (\ref{d}).
Moreover, $F$ preserves $\Kb{\sf proj}$ implies that $G$ preserves $\Db{\sf mod}$.
Taking $X\in \modcat{B}$ and using \cite[Section 4.2]{Hu2017aa},
we have that
$\overline{G}(X)$ is the cokernel of
some map appeared in
the projective resolution of $G(X)$.
Therefore, the fact that $G$ preserves $\Db{\sf mod}$
implies that $\overline{G}$ preserves finitely generated modules.
Now the statement follows from Lemma \ref{cdar}.
\end{proof}

A sequence of functors $(F_1, F_2,\cdots, F_r)$ between two categories is called an {\em adjoint tuple} if $(F_i, F_{i+1})$ is an adjoint pair for all $i=1, \cdots, r-1$.

\begin{thm}\label{ARE}
Let $A$ and $B$ be two algebras. Suppose that the sequence $(F_{1}, F_{2}, F_{3}, F_{4}, F_{5})$ of triangle functors in the following diagram is an adjoint tuple.
\[\xymatrix{
	\D{\Modcat{B}}\ar[rr]^{F_3}\ar@/_2pc/[rr]_{F_5}\ar@/^2pc/[rr]^{F_1}&&
\D{\Modcat{A}}\ar@/^1pc/[ll]^{F_4}\ar@/_1pc/[ll]_{F_2}&& }\]
Assume that $F_3$ induces a singular equivalence  between $B$ and $A$. If $F_4$ preserves $\Kb{\sf Proj}$, then ARC holds for $A$ if and only if it holds for $B$.
\end{thm}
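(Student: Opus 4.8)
The plan is to prove the two implications of the equivalence separately, each time by extracting a length-three adjoint triple from the tuple $(F_1,F_2,F_3,F_4,F_5)$ and feeding it to Corollary~\ref{cd1}. For ``ARC holds for $A$ $\Rightarrow$ ARC holds for $B$'' I would use the adjoint triple $(F_2,F_3,F_4)$, whose middle functor $F_3\colon\D{\Modcat{B}}\to\D{\Modcat{A}}$ is precisely the one inducing the singular equivalence. For the reverse implication I would use the adjoint triple $(F_1,F_2,F_3)$, now regarding the middle functor $F_2\colon\D{\Modcat{A}}\to\D{\Modcat{B}}$ as pointing the other way and interchanging the names $A$ and $B$ in Corollary~\ref{cd1}.

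The first step is to collect the required preservation properties, all of which are consequences of Lemma~\ref{re1}. Each of $F_2,F_3,F_4$ admits a right adjoint (namely $F_3,F_4,F_5$), so the implication $(3)\Rightarrow(1)$ of Lemma~\ref{re1}, applied to the adjoint pairs $(F_1,F_2)$, $(F_2,F_3)$ and $(F_3,F_4)$, shows that $F_1$, $F_2$ and $F_3$ each preserve $\Kb{\sf proj}$. Being moreover left adjoints (of $F_3$ and $F_4$), $F_2$ and $F_3$ preserve coproducts, and a triangle functor that preserves $\Kb{\sf proj}$ and coproducts automatically preserves $\Kb{\sf Proj}$ as well, since it carries the rank-one free module into $\Kb{\sf Proj}$, hence every coproduct of copies of it, and every object of $\Kb{\sf Proj}$ is built from such coproducts by finitely many cones and direct summands. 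Finally, the implications $(1)\Rightarrow(4)$ and $(1)\Rightarrow(5)$ of Lemma~\ref{re1}, applied to the same three pairs, give that $F_2$, $F_3$ and $F_4$ preserve $\Db{\sf mod}$ and $\Db{\sf Mod}$. Together with the standing hypothesis that $F_4$ preserves $\Kb{\sf Proj}$, this supplies every ``preservation of projective complexes'' condition that Corollary~\ref{cd1} demands of either triple.

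With these facts available, the first implication is immediate: apply Corollary~\ref{cd1} to $(F,G,H)=(F_2,F_3,F_4)$; here $G=F_3$ preserves $\Kb{\sf proj}$, $H=F_4$ preserves $\Kb{\sf Proj}$, and $G=F_3$ induces a singular equivalence, hence in particular a fully faithful functor between the singularity categories, so the corollary yields that ARC for $A$ implies ARC for $B$. For the converse I would apply Corollary~\ref{cd1} to $(F,G,H)=(F_1,F_2,F_3)$ with the roles of $A$ and $B$ swapped; by the previous paragraph $G=F_2$ preserves $\Kb{\sf proj}$ and $H=F_3$ preserves $\Kb{\sf Proj}$, so the only thing left to verify is that $F_2$ induces a fully faithful functor $\Dsg{\Modcat{A}}\to\Dsg{\Modcat{B}}$. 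This is where the singular-equivalence hypothesis on $F_3$ is used again: since $F_2$ and $F_3$ both preserve $\Db{\sf Mod}$ and $\Kb{\sf Proj}$, each of them descends to the singularity categories, giving functors $\widetilde{F_2}$ and $\widetilde{F_3}$, and the adjunction $(F_2,F_3)$ passes to the Verdier quotients, so $(\widetilde{F_2},\widetilde{F_3})$ is again an adjoint pair; as $\widetilde{F_3}$ is an equivalence by hypothesis, uniqueness of adjoints forces $\widetilde{F_2}$ to be a quasi-inverse of $\widetilde{F_3}$, hence an equivalence, hence in particular fully faithful. Corollary~\ref{cd1} then gives that ARC for $B$ implies ARC for $A$, and the proof is complete.

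I expect the one genuinely delicate point to be the final one, namely turning the equivalence property of $\widetilde{F_3}$ into fully faithfulness of $\widetilde{F_2}$: this works precisely because $F_2$ is the left adjoint of $F_3$ and adjunctions descend along Verdier localizations, and the only technical care needed is to check that $F_2$ and $F_3$ really do restrict to the bounded derived categories and carry bounded complexes of projectives into one another, so that the induced functors on $\Dsg{\Modcat{A}}$ and $\Dsg{\Modcat{B}}$ exist and the descended adjunction is legitimate (Corollary~\ref{cd1} itself already packages Lemmas~\ref{non} and \ref{cdar} together with Proposition~\ref{cd}, so these are not re-derived here). It is worth remarking why the converse runs through $(F_1,F_2,F_3)$ rather than the more symmetric-looking $(F_3,F_4,F_5)$: the latter choice would require $F_5$ to preserve $\Kb{\sf Proj}$, which is not among the hypotheses and in general fails, whereas the right-hand functor $F_3$ of the triple actually used preserves $\Kb{\sf Proj}$ for free. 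For the same reason one never needs $F_4$ to preserve \emph{finitely generated} projectives; only the ``big'' property that $F_4$ preserves $\Kb{\sf Proj}$, which is exactly the hypothesis, is invoked.
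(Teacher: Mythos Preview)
Your proposal is correct and mirrors the paper's proof exactly: both implications are obtained from Corollary~\ref{cd1}, applied to the triple $(F_2,F_3,F_4)$ for ``ARC for $A\Rightarrow$ ARC for $B$'' and to $(F_1,F_2,F_3)$ with the roles of $A$ and $B$ swapped for the converse, the latter after observing that $F_2$ inherits the singular-equivalence property from its right adjoint $F_3$ via the descended adjunction. The preparatory harvesting of preservation properties from Lemma~\ref{re1}, including the remark that $F_3$ preserves $\Kb{\sf Proj}$ because it preserves $\Kb{\sf proj}$ and coproducts, is also identical to the paper's.

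One small point of care: in your descent argument for $F_2$ you pass to the big singularity categories $\Dsg{\Modcat{A}}$, $\Dsg{\Modcat{B}}$ via preservation of $\Db{\sf Mod}$ and $\Kb{\sf Proj}$, and then invoke ``$\widetilde{F_3}$ is an equivalence by hypothesis''. Strictly speaking the hypothesis asserts only that $F_3$ induces an equivalence between the \emph{small} singularity categories $\Dsg{A}$ and $\Dsg{B}$. The paper sidesteps this by using instead that $F_2$ and $F_3$ preserve $\Db{\sf mod}$ and $\Kb{\sf proj}$ (facts you have also established) to descend the adjunction $(F_2,F_3)$ directly to the small singularity categories, where the hypothesis applies verbatim and forces $F_2$ to induce a singular equivalence as well. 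Replacing ``$\Db{\sf Mod}$ and $\Kb{\sf Proj}$'' by ``$\Db{\sf mod}$ and $\Kb{\sf proj}$'' in that paragraph aligns your argument with the paper's and removes the ambiguity.
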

\begin{proof}
By Lemma \ref{re1}, the functors $F_1, F_2$ and $F_3$ preserve $\Kb{\sf proj}$.
Now consider the adjoint triple $(F_2, F_3, F_4)$. Since $F_4$ preserves $\Kb{\sf Proj}$ and $F_3$ induces a singular equivalence, by Corollary \ref{cd1}, ARC holds for $A$ implies that it holds for $B$.

Conversely, consider the adjoint triple $(F_1, F_2, F_3)$. By Lemma \ref{re1},
the functors $F_2$ and $F_3$ preserve both $\Kb{\sf proj}$ and $\Db{\sf mod}$. Thus, $F_2$ and $F_3$ induce an adjoint pair between the singularity categories of $A$ and $B$. Since $F_3$ induces a singular equivalence, so does $F_2$. Moreover $F_3$ also preserves $\Kb{\sf Proj}$ since it preserves both $\Kb{\sf proj}$ and coproducts. Hence, applying Corollary \ref{cd1} to the adjoint triple $(F_1, F_2, F_3)$, we conclude that ARC holds for $B$ implies that it holds for $A$.
\end{proof}

\begin{cor}\label{6-func}
Let $A$ and $B$ be two algebras. Suppose that the sequence $(F_{1}, \cdots, F_{6})$ of triangle functors in the following diagram is an adjoint tuple.
\[\xymatrix{
	\D{\Modcat{B}}\ar[rr]^{F_3}\ar@/_2pc/[rr]_{F_5}\ar@/^2pc/[rr]^{F_1}&&\D{\Modcat{A}}\ar@/^1pc/[ll]^{F_4}\ar@/_1pc/[ll]_{F_2}\ar@/^3pc/[ll]^{F_6}&& }\]
Assume that $F_3$ induces a singular equivalence  between $B$ and $A$. Then ARC holds for $A$ if and only if it holds for $B$.
\end{cor}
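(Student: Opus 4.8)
The plan is to deduce Corollary~\ref{6-func} directly from Theorem~\ref{ARE} by discarding the sixth functor and applying that theorem to the truncated adjoint tuple $(F_1, F_2, F_3, F_4, F_5)$. The hypothesis that $F_3$ induces a singular equivalence is already given, so the only thing left to verify is the remaining hypothesis of Theorem~\ref{ARE}, namely that $F_4$ preserves $\Kb{\sf Proj}$. Thus the whole corollary reduces to showing that the presence of the extra functor $F_6$ forces this preservation property on $F_4$; everything else is quoting Theorem~\ref{ARE}.

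To obtain that property I would argue in two steps, paralleling the computation already carried out inside the proof of Theorem~\ref{ARE} for $F_3$. First, since $(F_4, F_5)$ is an adjoint pair, $F_4$ is a left adjoint and hence commutes with arbitrary coproducts. Second, I would apply Lemma~\ref{re1} to the pair $(F_4, F_5)$, in the variance $F_4\colon \D{\Modcat{A}}\to\D{\Modcat{B}}$ with right adjoint $F_5$: the key point is that $(F_5, F_6)$ being an adjoint pair means exactly that $F_5$ admits a right adjoint, which is condition $(3)$ of Lemma~\ref{re1}. By the equivalences $(1)\Leftrightarrow(3)$ in that lemma, $F_4$ preserves $\Kb{\sf proj}$. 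Now, as in the proof of Theorem~\ref{ARE} (``$F_3$ also preserves $\Kb{\sf Proj}$ since it preserves both $\Kb{\sf proj}$ and coproducts''), preservation of $\Kb{\sf proj}$ together with preservation of coproducts upgrades to preservation of $\Kb{\sf Proj}$: $F_4$ sends $A$ into $\Kb{\pmodcat{B}}$, hence every free $A$-module and (taking summands) every projective $A$-module into $\Kb{\Pmodcat{B}}$, and hence all of $\Kb{\Pmodcat{A}}$ into $\Kb{\Pmodcat{B}}$.

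With $F_4$ now known to preserve $\Kb{\sf Proj}$ and $F_3$ inducing a singular equivalence, Theorem~\ref{ARE} applied to $(F_1, F_2, F_3, F_4, F_5)$ yields that ARC holds for $A$ if and only if it holds for $B$, which is the assertion. I do not expect a genuine obstacle here: the content is purely the bookkeeping of which of the six adjunctions supplies which hypothesis of Lemma~\ref{re1}, plus the already-used observation that ``$\Kb{\sf proj}$ plus coproducts'' gives ``$\Kb{\sf Proj}$''. The only point requiring a little care is keeping the variances straight, so that Lemma~\ref{re1} is invoked for the pair $(F_4, F_5)$ with $F_4$ going from $\D{\Modcat{A}}$ to $\D{\Modcat{B}}$, and so that $F_6$ is correctly identified as a right adjoint of $F_5$ rather than, say, a left adjoint.
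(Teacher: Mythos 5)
Your proposal is correct and follows essentially the same route as the paper: reduce to Theorem~\ref{ARE} for the truncated tuple $(F_1,\dots,F_5)$, obtain that $F_4$ preserves $\Kb{\sf proj}$ from Lemma~\ref{re1} (using that its right adjoint $F_5$ itself has a right adjoint $F_6$), note $F_4$ preserves coproducts as a left adjoint, and upgrade to preservation of $\Kb{\sf Proj}$. The paper's proof is just a terser statement of exactly this argument.
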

\begin{proof}
	According to Theorem \ref{ARE}, it suffices to prove that $F_4$ preserves $\Kb{\sf Proj}$. However, this follows easily from the fact that $F_4$ preserves $\Kb{\sf proj}$ and coproducts by Lemma \ref{re1}.
\end{proof}

An immediate consequence of Corollary \ref{6-func} is that derived equivalences preserve ARC, which was proved by Wei in \cite{Wei2012aa}.  Actually, a derived equivalence $F$ and its quasi-inverse $F^{-1}$ give rise to an adjoint tuple $(F, F^{-1}, F, F^{-1}, \cdots )$ of arbitrary length, and $F$ induces a singular equivalence.

\medskip
Adjoint tuples typically occur in recollements and ladders.

\medskip
Let $\mathcal{T}_1$, $\mathcal{T}$ and $\mathcal{T}_2$ be
triangulated categories. A {\it recollement} \cite{Beuilinson1982aa} of $\mathcal{T}$
relative to $\mathcal{T}_1$ and $\mathcal{T}_2$ is a diagram
$$\xymatrix@!=4pc{ \mathcal{T}_1 \ar[r]|{i_*=i_!} & \mathcal{T} \ar @/_1pc/[l]|{i^*}
\ar @/^1pc/[l]|{i^!} \ar[r]|{j^!=j^*} & \mathcal{T}_2
\ar @/_1pc/[l]|{j_!} \ar @/^1pc/[l]|{j_*}}$$
of triangulated categories and triangle functors
such that

(1) $(i^*,i_*,i^{!}), (j_!,j^!, j_{*})$ are adjoint triples of triangle functors;

(2) $i_*$, $j_!$ and $j_*$ are full embeddings;

(3) $j^!i_*=0$ (and thus also $i^!j_*=0$ and $i^*j_!=0$);

(4) for each $X \in \mathcal {T}$, there are triangles

$$\begin{array}{l} j_!j^!X \rightarrow X  \rightarrow i_*i^*X  \rightarrow
\\ i_!i^!X \rightarrow X  \rightarrow j_*j^*X  \rightarrow
\end{array},$$ where the maps are given by adjunctions.

\medskip
A {\it ladder} \cite{Angeleri-Hugel2017aa} is a finite or infinite diagram of triangulated categories and triangle functors
$$\xymatrix@!=4pc{ \mathcal{T}_1 \ \ar @/^1pc/[r] \ar @/_1pc/[r] & \mathcal{T}
\ar[l] \ar @/^2pc/[l]^\vdots \ar @/_2pc/[l]_\vdots
\ \ar @/^1pc/[r] \ar @/_1pc/[r]
 & \mathcal{T}_2 \ar[l] \ar @/^2pc/[l]^\vdots \ar @/_2pc/[l]_\vdots
}$$ such that any three consecutive rows form a recollement.
The {\it height} of a ladder is the number of recollements contained in it (counted with multiplicities).

\medskip
Now, we are ready to apply our discussion to ladders.

\begin{thm}\label{lad}
Suppose that $A$, $B$ and $C$ are algebras, and there is a ladder of height 3
\[\xymatrix{
	\D{\Modcat{B}}\ar@/^1pc/[rr]|{i_*}\ar@/_1pc/[rr]|{i_?}&&
\D{\Modcat{A}} \ar[ll]|{i^!}\ar@/_2pc/[ll]|{i^*}\ar@/^2pc/[ll]
\ar@/^1pc/[rr]|{j^*}\ar@/_1pc/[rr]|{j^?}&&
\D{\Modcat{C}}\ar[ll]|{j_*}\ar@/_2pc/[ll]|{j_!}\ar@/^2pc/[ll]
}.\]

\medskip

\noindent
Then ARC holds for $A$ implies that it holds for $B$.
Moreover, if the ladder can be completed to a ladder of height $4$,
then the following statements hold.
\begin{enumerate}
	\item[{$(1)$}.] If the ARC holds for $A$, then it holds for $B$ and $C$;	
	\item[{$(2)$}.] $i_*$ (resp. $j^*$) induces a singular equivalence if and only if $C$ (resp. $B$) has finite global dimension,
and in this case, the ARC holds for $A$ if and only if it holds for $B$ (resp. $C$).
\end{enumerate}
\end{thm}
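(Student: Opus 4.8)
The plan is to transfer ARC along the recollement embeddings supplied by the ladder, using Corollary~\ref{cd1} (and, where convenient, Theorem~\ref{ARE}). Write the recollement of $\D{\Modcat{A}}$ relative to $\D{\Modcat{B}}$ and $\D{\Modcat{C}}$ sitting inside the ladder as $(i^*,i_*,i^!)$ on the left and $(j_!,j^!,j_*)$ on the right (with $j^!=j^*$), and let $i_?$ (resp.\ $j^?$) be the row of the ladder just below $i^!$ (resp.\ just below $j_*$), so that
\[i^*\dashv i_*\dashv i^!\dashv i_?\qquad\text{and}\qquad j_!\dashv j^*\dashv j_*\dashv j^?;\]
a height-$3$ ladder supplies in addition one more row below each of $i_?$, $j^?$, and a height-$4$ ladder yet one more. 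Recall also that $i_*$ and $j_*$ are fully faithful, so the unit $\mathrm{id}\to i^!i_*$ and the counit $j^*j_*\to\mathrm{id}$ are invertible. The first step is formal: reading off which rows of the ladder provide a left resp.\ right adjoint at each stage and threading this through Lemma~\ref{re1}, one sees that $i_*$, $j_*$ preserve $\Kb{\sf proj}$, that $i_*,i^!,j^*,j_*$ preserve $\Db{\sf mod}$, and that $i^!$ (already with a height-$3$ ladder, using the row below $i_?$) and $j^?$ (using the extra row of a height-$4$ ladder) preserve $\Kb{\sf proj}$ and coproducts, hence preserve $\Kb{\sf Proj}$. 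Therefore $i_*$ descends to a triangle functor $\bar{i_*}\colon\Dsg{\Modcat{B}}\to\Dsg{\Modcat{A}}$ with right adjoint $\bar{i^!}$ induced by $i^!$; since the unit $\mathrm{id}\to i^!i_*$ descends and stays invertible, $\bar{i_*}$ is fully faithful. In the same way $j_*$ descends to a fully faithful $\bar{j_*}\colon\Dsg{\Modcat{C}}\to\Dsg{\Modcat{A}}$.

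Applying Corollary~\ref{cd1} to the adjoint triple $(i^*,i_*,i^!)$ now gives the first assertion, ARC for $A$ $\Rightarrow$ ARC for $B$, using only a height-$3$ ladder. Under the height-$4$ hypothesis, the same corollary applied to $(j^*,j_*,j^?)$ shows ARC for $A$ $\Rightarrow$ ARC for $C$; together with the first assertion this is statement (1).

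For (2), recall that the singularity category of an algebra $D$ vanishes exactly when $\gldim D<\infty$, so that in that case $\Db{\modcat{D}}=\Kb{\pmodcat{D}}$. Fix $M\in\Db{\modcat{A}}$ and use the recollement triangle $i_*i^!M\to M\to j_*j^*M\to$. If $\gldim C<\infty$, then $j^*M\in\Db{\modcat{C}}=\Kb{\pmodcat{C}}$, and since $j_*$ preserves $\Kb{\sf proj}$ the object $j_*j^*M$ is perfect; hence in $\Dsg{\Modcat{A}}$ we have $M\cong i_*i^!M=\bar{i_*}(i^!M)$ with $i^!M\in\Db{\modcat{B}}$, so $\bar{i_*}$ is essentially surjective, hence an equivalence --- that is, $i_*$ induces a singular equivalence. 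Conversely, if $\bar{i_*}$ is an equivalence, the recollement identity $j^*i_*=0$ descends to $\bar{j^*}\bar{i_*}=0$, so $\bar{j^*}=0$ (as $\bar{i_*}$ is an equivalence); but $\bar{j^*}\colon\Dsg{\Modcat{A}}\to\Dsg{\Modcat{C}}$ is essentially surjective, because $j^*j_*\cong\mathrm{id}$ and $j_*$ preserves $\Db{\sf mod}$, so $\Dsg{\Modcat{C}}=0$ and $\gldim C<\infty$. Symmetrically, $\gldim B<\infty$ makes $i_*i^!M$ perfect, whence $M\cong\bar{j_*}(j^*M)$ and $\bar{j_*}$ --- equivalently $\bar{j^*}$ --- is an equivalence; and if $\bar{j^*}$ is an equivalence, $\bar{j^*}\bar{i_*}=0$ forces $\bar{i_*}=0$, and since $\bar{i_*}$ is fully faithful this gives $\Dsg{\Modcat{B}}=0$ and $\gldim B<\infty$. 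Finally, when $\gldim C<\infty$ one implication of ``ARC for $A$ $\Leftrightarrow$ ARC for $B$'' is the first assertion, and for the other one applies Corollary~\ref{cd1} to the adjoint triple $(i_*,i^!,i_?)$: here $i^!$ preserves $\Kb{\sf proj}$, $i_?$ preserves $\Kb{\sf Proj}$ by the height-$4$ hypothesis, and $\bar{i^!}$ --- the right adjoint of the equivalence $\bar{i_*}$ --- is itself an equivalence, in particular fully faithful (equivalently, one feeds the adjoint tuple around $i_*$ into Theorem~\ref{ARE}). The case $\gldim B<\infty$, giving ``ARC for $A$ $\Leftrightarrow$ ARC for $C$'', is handled the same way with $(j^*,j_*,j^?)$.

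The part that needs care is the formal first step: keeping precise track of which rows of the ladder supply a left resp.\ right adjoint at each stage and threading this through Lemma~\ref{re1} to obtain exactly the preservation properties ($\Kb{\sf proj}$, $\Db{\sf mod}$, $\Kb{\sf Proj}$) that Corollary~\ref{cd1} and Theorem~\ref{ARE} demand --- in particular, that a height-$3$ ladder already suffices for the $B$-statement whereas the $C$-statement and the two equivalences genuinely consume the extra row of a height-$4$ ladder. The other subtlety is that full faithfulness (resp.\ being an equivalence) of $\bar{i_*}$ and $\bar{j_*}$ is not automatic from the corresponding property of $i_*$, $j_*$ on the unbounded level; it holds precisely because the adjoints $i^!$, $j^?$ also descend to the singularity categories, so that the relevant units and counits descend as well.
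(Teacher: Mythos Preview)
Your proof is correct and follows essentially the same strategy as the paper: transfer ARC along the fully faithful embeddings $i_*$ and $j_*$ via Corollary~\ref{cd1}, after using Lemma~\ref{re1} to record which functors preserve $\Kb{\sf proj}$, $\Db{\sf mod}$ and $\Kb{\sf Proj}$. Two points of comparison are worth noting. First, for the characterisation ``$i_*$ induces a singular equivalence $\Leftrightarrow \gldim C<\infty$'' the paper simply invokes \cite[Proposition~2.5]{Liu2015aa}, which gives a recollement of singularity categories; your direct argument with the triangle $i_*i^!M\to M\to j_*j^*M\to{}$ is a clean unpacking of that citation and makes the proof more self-contained. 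Second, for the reverse implication in~(2) the paper feeds the six-term adjoint string on the $i$-side (resp.\ $j$-side) into Corollary~\ref{6-func}, whereas you use Corollary~\ref{cd1} on the shifted triple $(i_*,i^!,i_?)$ (or Theorem~\ref{ARE}); these are equivalent.

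Two minor imprecisions to fix. You tacitly assume the height-$4$ extension is \emph{downward}; the theorem allows either direction, and the paper disposes of this with a WLOG (the upward case replaces $(j^*,j_*,j^?)$ by $(j^{\rm new},j_!,j^*)$ throughout). More substantively, your final sentence ``handled the same way with $(j^*,j_*,j^?)$'' does not give the direction ARC for $C$ $\Rightarrow$ ARC for $A$: applying Corollary~\ref{cd1} to that triple yields only ARC for $A$ $\Rightarrow$ ARC for $C$, which is already statement~(1). For the reverse direction you need the triple $(j_!,j^*,j_*)$ with middle functor $j^*\colon A\to C$ (now a singular equivalence, hence fully faithful), or equivalently Theorem~\ref{ARE} applied to the five-term string $(j_!,j^*,j_*,j^?,j_{??})$ --- exactly parallel to what you did on the $i$-side.
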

\begin{proof}
It follows from Lemma \ref{re1} that $i^*$, $i_*$ and $i^!$ preserve $\Kb{\sf proj}$,
and $i^{!}$ preserves coproducts. Therefore, $i^{!}$ preserves $\Kb{\sf Proj}$.
Moreover, $i_*$ is a fully faithful functor which preserve $\Kb{\sf proj}$ and
$\Db{\sf mod}$, and then the induced functor $\tilde{i}_* : \Dsg{B}\rightarrow \Dsg{A}$
is also fully faithful, see \cite[Lemma 1.2]{Orlov2004aa}.
Applying Corollary \ref{cd1} to the adjoint triple $(i^*,i_*,i^{!})$, we get that
the ARC holds for $B$ provided it holds for $A$.

Assume the ladder can be completed to a ladder of height 4. Without loss of generality,
we may assume it extend one step downwards, and the upward case can be proved similarly.
Consider the following ladder of height 4
\begin{equation}\label{ladder}
\xymatrix@!=3pc{
	\D{\Modcat{B}}\ar@/^1pc/[rr]|{i_*}\ar@/_1pc/[rr]|{i_?}\ar@/_3pc/[rr]&&
\D{\Modcat{A}}\ar[ll]|{i^!}\ar@/_2pc/[ll]|{i^*}\ar@/^2pc/[ll]
\ar@/^1pc/[rr]|{j^*}\ar@/_1pc/[rr]|{j^?}\ar@/_3pc/[rr]&&
\D{\Modcat{C}}\ar[ll]|{j_*}\ar@/_2pc/[ll]|{j_!}\ar@/^2pc/[ll] }
\end{equation}

\vspace{1cm}
\noindent
Then, it follows from Lemma \ref{re1} that $j_*$ and $j^?$ preserve $\Kb{\sf proj}$.
Moreover, $j^?$ preserves coproducts and then it preserves $\Kb{\sf Proj}$.
On the other hand, $j_*$ induces a fully faithful functor between the
corresponding singularity categories.
Now, applying Corollary \ref{cd1} to the adjoint triple $(j^*,j_*,j^?)$,
we get that the ARC holds for $C$ if it holds for $A$.

By \cite[Proposition 2.5]{Liu2015aa},
the functors $i_*,i^!,i_?,j^*,j_*$ and $j^?$ induce a recollement between
the corresponding singularity categories.
Therefore, $i_*$ induces a singular equivalence if and only if
$\Dsg{C}=0$, and this occurs precisely when
$C$ has finite global dimension.
In this case, $i^!$ also induces a singular equivalence.
Applying Corollary~\ref{6-func} to the left
part of the ladder~\ref{ladder}, we obtain that the ARC holds for $A$ if and only if it holds for $B$.
Similarly, $j^*$ induces a singular equivalence if and only if $B$ has finite global dimension,
and in this case, the ARC holds for $A$ if and only if it holds for $C$.
\end{proof}

Theorem \ref{lad} can be applied to triangle matrix algebras.
\begin{cor}\label{tri-alg}
Let $A=\left(
\begin{array}{cccc}
B& 0  \\
_CM_B& C \\
\end{array}
\right)$ be a triangular matrix algebra, where $B, C$ are algebras and $M$ a finitely generated $C$-$B$-bimodules.
Then following statements hold.
\begin{enumerate}
	\item[$(1)$.] If $\gldim B<\infty$ and $\projdim_CM<\infty$, then the ARC holds for $A$ if and only if it holds for $C$;
	\item[$(2)$.] If $\gldim C<\infty$ and $\projdim (M)_B<\infty$, then the ARC holds for $A$ if and only if it holds for $B$.
\end{enumerate}
\end{cor}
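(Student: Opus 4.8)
The plan is to realise the triangular matrix algebra $A$ inside a ladder of the kind studied in Theorem~\ref{lad} and then to quote that theorem. Let $e\in A$ be the idempotent with $eAe=B$, and put $e'=1-e$, so that $e'Ae'=C$, $A/AeA\cong C$ and $A/Ae'A\cong B$. Here $AeA=Ae$ is a projective left $A$-module, and since $eA\cong{}_BB$ is free (hence $\Tor$-acyclic) as a left $eAe$-module, the multiplication map $Ae\otimes_B eA\to AeA$ is bijective; thus $AeA$ is a stratifying ideal, and the recollement of $A$-modules attached to $e$ lifts to a recollement of unbounded derived categories
\[\D{\Modcat{C}}\ \overset{i_*}{\lra}\ \D{\Modcat{A}}\ \overset{j^!}{\lra}\ \D{\Modcat{B}},\]
with adjoint triples $(i^*,i_*,i^!)$ and $(j_!,j^!,j_*)$, in which $i_*$ is restriction of scalars along $A\twoheadrightarrow C$, $j_*$ is restriction of scalars along $A\twoheadrightarrow B$, $i^!=\RHom_A(C,-)$, $i^*=C\otimesL_A-$, $j^!=eA\otimes_A-$ and $j_!=Ae\otimes_B-$. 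In the notation of Theorem~\ref{lad} the middle algebra is $A$, the left one is our $C$ (embedded by $i_*$) and the right one is our $B$ (embedded by $j_*$).

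First I would check that this recollement always extends to a ladder of height $3$, so that Theorem~\ref{lad} applies. The functor $i_*$ carries ${}_CC$ to ${}_A(A/AeA)\cong Ae'$, a projective module, so $i_*$ preserves $\Kb{\sf proj}$; by Lemma~\ref{re1} the functor $i^!$ then preserves coproducts and has a right adjoint $i_?$. Likewise $j^!=eA\otimes_A-$ carries ${}_AA$ to $eA\cong{}_BB$, so $j^!$ preserves $\Kb{\sf proj}$ and, again by Lemma~\ref{re1}, $j_*$ has a right adjoint $j^?$; by the standard theory of recollements of derived module categories (cf.\ \cite{Angeleri-Hugel2017aa,Liu2015aa}) the functors $i^*,i_*,i^!,i_?$ and $j_!,j^!,j_*,j^?$ form a ladder of height $3$. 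At this point Theorem~\ref{lad} already gives that ARC for $A$ implies ARC for $C$.

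Next I would show that each dimension hypothesis prolongs the ladder to height $4$. From the resolution $0\to Ae\to A\to C\to 0$ one computes $i^!(A)=\RHom_A(C,A)\cong e'A$, which, regarded as a left $C$-module, equals ${}_CM\oplus{}_CC$; hence $i^!$ preserves $\Kb{\sf proj}$ precisely when $\projdim_C M<\infty$, and in that case Lemma~\ref{re1} provides a right adjoint of $i_?$. In the same situation the ideal $Ae'A$ is, up to a projective summand, the $A$-module $i_*({}_CM)$, and $i_*$ preserves projectivity (as ${}_A(A/AeA)\cong Ae'$), so $\projdim_A i_*({}_CM)\leq\projdim_C M<\infty$; hence $\projdim_A{}_A(A/Ae'A)<\infty$, i.e.\ $j_*$ preserves $\Kb{\sf proj}$, which by Lemma~\ref{re1} yields a right adjoint of $j^?$. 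Therefore $\projdim_C M<\infty$ extends the ladder one step downwards, to height $4$. Dually, $Ae\cong B\oplus M$ as a right $B$-module and, as right $A$-modules, $AeA\cong eA\oplus N$ with $N$ the inflation of $M_B$ along $A\to B$; so if $\projdim (M)_B<\infty$ then $N$, hence $A/AeA$, is perfect as a right $A$-module, whence $i^*$ and $j_!$ acquire left adjoints and the ladder extends one step upwards, to height $4$.

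Finally I would invoke Theorem~\ref{lad}. For~(1): $\projdim_C M<\infty$ provides a ladder of height $4$, and $\gldim B<\infty$ says exactly that the right-hand algebra of the ladder, our $B$, has finite global dimension; hence $i_*$ induces a singular equivalence, and Theorem~\ref{lad}(2) gives that ARC holds for $A$ if and only if it holds for $C$. For~(2): $\projdim (M)_B<\infty$ provides a ladder of height $4$, and $\gldim C<\infty$ says that the left-hand algebra, our $C$, has finite global dimension; hence $j^*$ induces a singular equivalence, and Theorem~\ref{lad}(2) gives that ARC holds for $A$ if and only if it holds for $B$. The step I expect to be the main obstacle is making the second and third paragraphs precise: pinning down the six functors of the triangular matrix recollement, computing $\RHom_A(A/AeA,A)$ and the projective dimensions of the two relevant inflation functors (this is what converts $\projdim_C M<\infty$ and $\projdim (M)_B<\infty$ into the preservation of $\Kb{\sf proj}$ needed to climb the ladder), and checking that the extra adjoints really do assemble into a ladder of height $4$; the remaining homological bookkeeping is routine.
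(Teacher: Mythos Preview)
Your overall strategy coincides with the paper's: exhibit the triangular matrix recollement as part of a ladder and invoke Theorem~\ref{lad}. However, there is a genuine gap stemming from a miscount of ladder heights.

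With $n$ functors on each side one obtains a ladder of height $n-2$ (each recollement uses three consecutive rows). Your four functors $i^*,i_*,i^!,i_?$ and $j_!,j^!,j_*,j^?$ therefore give a ladder of height~$2$, not height~$3$. More importantly, the two facts you establish under $\projdim_CM<\infty$ --- that $i^!$ preserves $\Kb{\sf proj}$ and that $j_*$ preserves $\Kb{\sf proj}$ --- are \emph{one and the same} extension step: $i^!$ and $j_*$ sit in the same row of the ladder, and their right adjoints produce a single new row. So $\projdim_CM<\infty$ alone only brings you to height~$3$, not height~$4$, and Theorem~\ref{lad}(2) is not yet available. The same remark applies dually to your treatment of $\projdim(M)_B<\infty$.

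The fix is exactly what the paper does: in case~(1), use $\gldim B<\infty$ twice. It guarantees $\projdim M_B<\infty$ automatically, and hence, by your own dual argument, the ladder also extends one step \emph{upwards}; combining this upward step with the downward step from $\projdim_CM<\infty$ yields height~$4$. Only then does $\gldim B<\infty$ (now in its second role, as the finiteness condition on the right-hand algebra of the ladder) allow you to conclude via Theorem~\ref{lad}(2). Case~(2) is symmetric. Once you correct the height bookkeeping in this way, your explicit computations of $i^!(A)\cong e'A\cong {}_CM\oplus{}_CC$ and of $\projdim_A(A/Ae'A)$ are exactly what is needed to justify the citation of \cite[Example~3.4]{Angeleri-Hugel2017aa} that the paper leaves implicit.
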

\begin{proof}
Let $e_1=\left(
\begin{array}{cccc}
1 & 0  \\
0& 0 \\
\end{array}
\right)$ and $e_2=\left(
\begin{array}{cccc}
0 & 0  \\
0& 1 \\
\end{array}
\right)$. It follows from \cite[Example 3.4]{Angeleri-Hugel2017aa} that there is a
ladder of height $2$
\begin{equation}\label{la2}
\xymatrix{
	\D{\Modcat{C}}\ar@/^0.7pc/[rr]^{Ae_2\otimesL_C-}\ar@/_2pc/[rr]&&
\D{\Modcat{A}}\ar@/^0.7pc/[rr]
\ar@/^0.7pc/[ll]_{e_2A\otimesL_A-}\ar@/_2pc/[ll]\ar@/_2pc/[rr]&&
\D{\Modcat{B}}\ar@/^0.7pc/[ll]\ar@/_2pc/[ll]_{Ae_1\otimesL_B-}.}
\end{equation}

\vspace{0.7cm}
\noindent
If $\gldim B<\infty$ and $\projdim_CM<\infty$, then $\projdim M_B<\infty$
and by \cite[Example 3.4]{Angeleri-Hugel2017aa}, the ladder~(\ref{la2}) can be extended to a ladder of height $4$.
Applying Theorem~\ref{lad} to this ladder, we have that
the ARC holds for $A$ if and only if it holds for $C$.
The second case can be proved similarly.
\end{proof}

Recall that an $A$-module $M$ is {\em Gorenstein projective} if there are short exact sequences
$$0\lra X_i\lra P_i\lra X_{i+1}\lra 0, i\in\mathbb{Z}$$
in $\modcat{A}$ with $X_i\in {}^{\perp}A$ for all $i$ such that $M=X_0$. Particularly, all Gorenstein projective modules are contained in ${}^{\perp}A$. Denote by $\GPmodcat{A}$ the full subcategory of $\modcat{A}$ consisting
of all Gorenstein projective modules. The stable category of $\GPmodcat{A}$ is a triangulated category and is contained in $\underline{{}^{\perp}A}$. Thus, one gets full embeddings
$$\underline{\GPmodcat{A}}\hookrightarrow \underline{{}^{\perp}A}\hookrightarrow\Dsg{A}.$$
If ARC holds for $A$, that is, $\underline{{}^{\perp}A}$ does not contain any nonzero self-orthogonal objects, then there is no nonzero self-orthogonal objects in $\underline{\GPmodcat{A}}$.  It is then natural to conjecture:

\medskip
{\bf Gorenstein projective Conjecture} ({\bf GPC} for short):  A finitely generated Gorenstein projective
module $M$ over $A$ is projective if
 $\Ext ^i _A(M,M)
= 0$, for any $i \geq 1$.

\medskip
This conjecture was stated by Luo and Huang in \cite{Luo2008aa}.
Let $G:\Db{\modcat{B}}\ra\Db{\modcat{A}}$
be a non-negative triangle functor (a small module version of
definition~\ref{def-non}) admitting a right adjoint $H$ which preserving $\Kb{\rm proj}$.
Due to \cite[Proposition 5.3]{Hu2017aa}, we have
a  commutative diagram
$$ \xymatrix@C=1.5cm{
		\underline{\GPmodcat{B}} \ar[r]^{\overline{G}} \ar @{^{(}->}[d]^{\iota_B}
& \underline{\GPmodcat{A}} \ar @{^{(}->}[d]^{\iota_A}  \\
		 \Dsg{B}  \ar[r]^{\tilde{G}}   &\Dsg{A},}$$
\noindent analogous to Proposition~\ref{cd}.
Therefore, if we replace ARC by GPC, all results
in this section (e.g., Theorem \ref{ARE} and Theorem~\ref{lad}) still hold and the proofs are almost identical to the case of ARC.

\section{Singular equivalences induced by tensor functors}
Let $A$ and $B$ be two algebras, and $\cpx{X}$ be a complex of $B$-$A$-bimodules.
Then the tensor product functor $F=\cpx{X}\otimesL_A-:\D{\Modcat{A}}\rightarrow\D{\Modcat{B}}$
has a right adjoint $\RHom_B(\cpx{X},-)$. It is natural to ask when
$F$ induces an equivalence between the singularity categories,
and under which conditions this singular equivalence preserves ARC. 

It does happen often that $F$ induces a singular equivalence. Typical examples are singular equivalences of Morita type (with level) when $\cpx{X}$ is a module (\cite[Theorem 3.1]{Zhou2013aa},\cite[Definition 2.1]{Wang2015aa}). See \cite[Proposition 4.8]{Chen2020} for recent progress in this direction. In the general case, it was proved recently by  Dalezios \cite[Theorem 3.6]{Dalezios2020} that $F$ induces a singular equivalence if $\cpx{Y}:= \RHom_B(\cpx{X},B)$ is a perfect complex of $A$-modules, $\RHom_B(\cpx{X},\cpx{X})
\cong A$ in $\Dsg{A^{e}}$, and $\cpx{X}\otimesL_A\RHom_B(\cpx{X},B)\cong B$ in $\Dsg{B^{e}}$.

\medskip
The main result of this section is the following theorem.

\begin{thm}\label{atar}
Suppose that $\cpx{X}$ is a $B$-$A$-bimodules complex which is perfect over $B$ and $A$,
and assume $\cpx{Y}:= \RHom_B(\cpx{X},B)$ is a perfect complex of $A$-modules.
If ${}_B\cpx{X}\otimes _A^L-:\D{\Modcat{A}}\rightarrow\D{\Modcat{B}}$
induces a singular equivalence, then ARC holds for $B$ implies that it holds for $A$. If moreover
$\RHom_A(\cpx{Y}, A)\in \Kb{\pmodcat{B}}$, then ARC holds for $A$ if and only if it holds for $B$.
\end{thm}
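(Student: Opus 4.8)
The plan is to derive both implications from Corollary~\ref{cd1}, applied to two adjoint triples of tensor-type functors: one with $F:={}_B\cpx{X}\otimesL_A-\colon\D{\Modcat A}\to\D{\Modcat B}$ in the middle, the other with its right adjoint in the middle. First I would assemble these triples. Since $\cpx{X}$ is perfect over $B$, the right adjoint of $F$ is $\RHom_B(\cpx{X},-)\simeq\cpx{Y}\otimesL_B-$ with $\cpx{Y}=\RHom_B(\cpx{X},B)$ an $A$-$B$-bimodule complex; since $\cpx{Y}$ is perfect over $A$, the right adjoint of $\cpx{Y}\otimesL_B-$ is $\RHom_A(\cpx{Y},-)\simeq\cpx{Z}\otimesL_A-$ with $\cpx{Z}:=\RHom_A(\cpx{Y},A)$ a $B$-$A$-bimodule complex; and since $\cpx{X}$ is perfect over $A$, the functor $F$ admits a left adjoint $F'$ (e.g.\ because $F\simeq\RHom_A(\RHom_{A\opp}(\cpx{X},A),-)$, which has an obvious left adjoint, or by Brown representability, $F$ preserving products). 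This produces adjoint triples
\[
(F',\ F,\ \cpx{Y}\otimesL_B-)\qquad\text{and}\qquad(F,\ \cpx{Y}\otimesL_B-,\ \cpx{Z}\otimesL_A-).
\]

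Next I would record the preservation properties forced by the hypotheses. Both $F$ and $\cpx{Y}\otimesL_B-$ preserve $\Kb{\sf proj}$ (as $\cpx{X}\otimesL_A A=\cpx{X}$ is perfect over $B$ and $\cpx{Y}\otimesL_B B=\cpx{Y}$ is perfect over $A$), both preserve coproducts, and both preserve $\Db{\sf mod}$ (as $\cpx{X}$ is perfect over $A$ and $\cpx{Y}$ is perfect over $B$, the latter since $\cpx{Y}$ is the $B$-dual of the perfect complex ${}_B\cpx{X}$). Hence both functors descend to triangle functors between the singularity categories, and the adjunction $F\dashv(\cpx{Y}\otimesL_B-)$ descends to an adjunction between $\Dsg{A}$ and $\Dsg{B}$; as $F$ induces a singular equivalence, its adjoint $\cpx{Y}\otimesL_B-$ induces the quasi-inverse, so it too induces a singular equivalence.

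For the first implication I would apply Corollary~\ref{cd1} to $(F',F,\cpx{Y}\otimesL_B-)$, with $F$ in the middle: $F$ preserves $\Kb{\sf proj}$, its right adjoint $\cpx{Y}\otimesL_B-$ preserves $\Kb{\sf Proj}$ (it preserves $\Kb{\sf proj}$ and coproducts), and $F$ induces an equivalence---hence a fully faithful functor---between the singularity categories; this gives ARC for $B$ implies ARC for $A$. For the converse, under the extra hypothesis $\cpx{Z}=\RHom_A(\cpx{Y},A)\in\Kb{\pmodcat B}$, I would apply Corollary~\ref{cd1} to $(F,\cpx{Y}\otimesL_B-,\cpx{Z}\otimesL_A-)$, with $\cpx{Y}\otimesL_B-\colon\D{\Modcat B}\to\D{\Modcat A}$ in the middle: it preserves $\Kb{\sf proj}$, it induces a singular equivalence by the previous step, and its right adjoint $\cpx{Z}\otimesL_A-$ now preserves $\Kb{\sf Proj}$---exactly because $\cpx{Z}\otimesL_A A=\cpx{Z}$ is perfect over $B$, so that $\cpx{Z}\otimesL_A-$ preserves $\Kb{\sf proj}$ (and coproducts). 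Corollary~\ref{cd1} then gives ARC for $A$ implies ARC for $B$, whence the equivalence.

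The step I expect to cost the most care is the bookkeeping of one-sided module structures: one must check that perfectness of the bimodule complex $\cpx{X}$ on both sides, together with perfectness of $\cpx{Y}$ over $A$, yields precisely the preservation statements ($\Kb{\sf proj}$, $\Kb{\sf Proj}$, $\Db{\sf mod}$, coproducts) needed to descend the functors to the singularity categories and to meet the hypotheses of Corollary~\ref{cd1}. This also pins down the asymmetry between the two directions: the first needs only a left adjoint of $F$, which perfectness of $\cpx{X}$ over $A$ supplies, and uses $\cpx{Y}\otimesL_B-$ on the output side of the triple, where only perfectness of $\cpx{Y}$ over $A$ enters; the converse instead places $\cpx{Z}\otimesL_A-$ there, and nothing forces $\cpx{Z}$ to be perfect over $B$ without the extra assumption. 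A secondary point, handled as in the proof of Theorem~\ref{lad} via the standard descent of adjunctions to Verdier quotients, is that the derived adjunction $F\dashv(\cpx{Y}\otimesL_B-)$ really does pass to the induced functors on the singularity categories.
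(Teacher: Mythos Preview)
Your proposal is correct and follows essentially the same approach as the paper: both directions are obtained by applying Corollary~\ref{cd1} to the adjoint triples $(F',\,\cpx{X}\otimesL_A-,\,\cpx{Y}\otimesL_B-)$ and $(\cpx{X}\otimesL_A-,\,\cpx{Y}\otimesL_B-,\,\RHom_A(\cpx{Y},-))$, with the same verifications of $\Kb{\sf proj}$, $\Kb{\sf Proj}$, and descent to the singularity categories. The only cosmetic difference is that the paper packages the second direction through Theorem~\ref{ARE} (producing a fifth functor $K$ and invoking the 5-tuple result), whereas you apply Corollary~\ref{cd1} directly to the second triple; since Theorem~\ref{ARE} itself is just two calls to Corollary~\ref{cd1}, your route is marginally more direct but logically identical.
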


\begin{proof}
Note that there is an adjoint tuple
\[\xymatrix{
	\D{\Modcat{B}}
	\ar@/_0.7pc/[rr]^{
    \cpx{Y}\otimesL_B-	
}\ar@/^2pc/[rr]^{L}&& \D{\Modcat{A}}\ar@/^2pc/[ll]^{\RHom_A(\cpx{Y}, -)}\ar@/_0.7pc/[ll]_{\cpx{X}\otimesL_A-}&& }\]
with $\cpx{X}\otimesL_A-$ and $\cpx{Y}\otimesL_B-$ preserving $\Kb{\sf proj}$. The existence of $L$ follows from the assumption that $\cpx{X}_A$ is a perfect complex. Clearly, $\cpx{Y}\otimesL_B-$ preserves coproducts and thus $\cpx{Y}\otimesL_B-$
preserves $\Kb{\sf Proj}$. Now notice that
$\cpx{X}\otimesL_A-$ induce a singular equivalence between $B$ and $A$,
by Corollary~\ref{cd1}, ARC holds for $B$ implies that it holds for $A$.

Since $\cpx{Y}\otimesL_B-$ preserves $\Kb{\sf proj}$, it follows from
Lemma \ref{re1} that
$\RHom_A(\cpx{Y}, -)$ preserves coproducts,
and it admits a right adjoint $K$.
Hence, the condition $\RHom_A(\cpx{Y}, A)$
\linebreak
$\in \Kb{\pmodcat{B}}$
implies that $\RHom_A(\cpx{Y}, -)$
preserves $\Kb{\sf Proj}$. On the other hand,
$(\cpx{X}\otimesL_A-, \cpx{Y}\otimesL_B-)$ gives an adjoint
pair between the singularity categories.
Since $\cpx{X}\otimesL_A-$ induces a singular equivalence,
so does $\cpx{Y}\otimesL_B-$.
Therefore, there is an adjoint tuple
\[\xymatrix{
	\D{\Modcat{B}}\ar[rr]^{\cpx{Y}\otimesL_B-}\ar@/_2.5pc/[rr]_{K}\ar@/^2.5pc/[rr]^{L}&&\D{\Modcat{A}}\ar@/^1pc/[ll]^{\RHom_A(\cpx{Y}, -)}\ar@/_1pc/[ll]_{\cpx{X}\otimesL_A-}&& }\]
with $\RHom_A(\cpx{Y}, -)$ preserving $\Kb{\sf Proj}$ and $\cpx{Y}\otimesL_B-$ inducing a singular equivalence between $B$ and $A$. Then we have done by Theorem \ref{ARE}.
\end{proof}

Now we focus on singular equivalences from change of rings, which was studied in \cite{Oppermann2019aa, Dalezios2020}. Let $f : A\rightarrow B$ be a morphism of algebras with $\projdim_AB<\infty$ and $\projdim B_A<\infty$. Recall from \cite[Lemma 3.2]{Oppermann2019aa} that there is an adjoint pair
\begin{align}\label{ad1}
\xymatrix{
	\D{\Modcat{B}}\ar@/^-0.7pc/[rr]_{{}_AB\otimesL_B-}&&\D{\Modcat{A}}.\ar@/_0.7pc/[ll]_{{}_BB\otimesL_A-}&& }
\end{align}
If ${\rm Cone}(f)\in \Kb{\pmodcat{A^e}}$ (that is, $B\cong A$ in $\Dsg{A^{e}}$)
and ${}_BB\otimesL_AB_B\overset{\sim}{\rightarrow}{}_BB_B$ in $\Dsg{B^{e}}$,
then ${}_BB\otimesL_A-$ and ${}_AB\otimesL_B-$ induce
mutual equivalences between the singularity categories; see \cite[Corollary 3.7]{Dalezios2020}.

In particular, $f : A\rightarrow B$ is a \emph{homological epimorphism} if the induced functor
\[f_*={}_AB\otimesL_B-: \D{\Modcat{B}}\rightarrow \D{\Modcat{A}}\]
is a full embedding, or equivalently, there is an isomorphism ${}_BB\otimesL_AB_B\overset{\sim}{\rightarrow}{}_BB_B$ in $\D{\Modcat{B^e}}$; see \cite{Geigle1991aa}. In this case, ${}_AB\otimesL_B-$ induces a fully faithful functor between the corresponding singularity
categories.

\begin{cor}\label{cor-ring-hom}
Let $f : A\rightarrow B$ be a morphism of algebras with $\projdim_AB<\infty$ and $\projdim B_A<\infty$.
Then the following hold.
\begin{enumerate}
	\item[{$(1)$}.] If ${\rm Cone}(f)\in \Kb{\pmodcat{A^e}}$, then ARC holds for $B$ implies that it holds for $A$;
    \item[{$(2)$}.] If moreover $f$ is a homological epimorphism and $\RHom_A(B, A)\in \Kb{\pmodcat{B}}$,
then the ARC holds for $A$ if and only if it holds for $B$.
\end{enumerate}
\end{cor}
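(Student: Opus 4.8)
The plan is to deduce both statements from Theorem~\ref{atar} by taking $\cpx{X}={}_BB_A$, the bimodule $B$ viewed as a $B$-$A$-bimodule via $f$. First I would check the hypotheses of Theorem~\ref{atar}. The hypothesis $\projdim{}_AB<\infty$ says exactly that $\cpx{X}={}_BB_A$ is perfect as a complex of $A$-modules, and $\projdim B_A<\infty$ combined with $B$ being a finitely generated projective $B$-module on the left shows $\cpx{X}$ is perfect over $B$ as well. Next I would identify the right adjoint: $\cpx{Y}:=\RHom_B(\cpx{X},B)=\RHom_B({}_BB_A,B)\cong B_A$ as a complex of $A$-modules (up to the bimodule bookkeeping), which is perfect over $A$ by the same $\projdim B_A<\infty$ assumption. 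So the tensor functor ${}_BB\otimesL_A-\colon\D{\Modcat{A}}\to\D{\Modcat{B}}$ fits the setup, and via \eqref{ad1} it agrees with the functor written there.

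For part (1): the condition ${\rm Cone}(f)\in\Kb{\pmodcat{A^e}}$ means $B\cong A$ in $\Dsg{A^e}$. I would invoke the discussion right before the corollary (the Dalezios criterion, \cite[Corollary 3.7]{Dalezios2020}): since also, when $f$ is merely required to give $B\cong A$ in $\Dsg{A^e}$, one still needs ${}_BB\otimesL_AB_B\xrightarrow{\sim}{}_BB_B$ in $\Dsg{B^e}$ — but that isomorphism already holds in $\D{\Modcat{B^e}}$ whenever ${\rm Cone}(f)\in\Kb{\pmodcat{A^e}}$, because tensoring the triangle ${\rm Cone}(f)[-1]\to A\xrightarrow{f}B\to{\rm Cone}(f)$ on the right over $A$ with $B$ and using that ${\rm Cone}(f)\otimesL_AB\in\Kb{\pmodcat{B^e}}$ kills the cone in $\Dsg{B^e}$; hence ${}_BB\otimesL_A-$ induces a singular equivalence. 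With the singular equivalence in hand, Theorem~\ref{atar} gives that ARC for $B$ implies ARC for $A$.

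For part (2): if $f$ is in addition a homological epimorphism, then ${}_BB\otimesL_AB_B\xrightarrow{\sim}{}_BB_B$ already in $\D{\Modcat{B^e}}$, which together with ${\rm Cone}(f)\in\Kb{\pmodcat{A^e}}$ gives the singular equivalence via \cite[Corollary 3.7]{Dalezios2020} directly. Then I would feed the extra hypothesis $\RHom_A(B,A)\in\Kb{\pmodcat{B}}$ into the ``moreover'' clause of Theorem~\ref{atar}: since $\cpx{Y}\cong B_A$, the complex $\RHom_A(\cpx{Y},A)\cong\RHom_A(B,A)$ lies in $\Kb{\pmodcat{B}}$, so Theorem~\ref{atar} yields that ARC holds for $A$ if and only if it holds for $B$. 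The one point needing a little care — the main obstacle, such as it is — is the bimodule bookkeeping in identifying $\RHom_B({}_BB_A,B)$ with $B_A$ and then $\RHom_A(B_A,A)$ with $\RHom_A(B,A)$ as left $B$-complexes, i.e. making sure the $B$-$A$-bimodule structures line up so that the adjoint tuple appearing in Theorem~\ref{atar} really is the change-of-rings adjunction \eqref{ad1}; everything else is a direct citation.
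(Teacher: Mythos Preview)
Your argument for part~(2) is correct and matches the paper's: with the homological epimorphism hypothesis one has ${}_BB\otimesL_AB_B\cong B$ already in $\D{\Modcat{B^e}}$, hence (together with $\mathrm{Cone}(f)\in\Kb{\pmodcat{A^e}}$) a singular equivalence by \cite[Corollary~3.7]{Dalezios2020}, and then Theorem~\ref{atar} applies with $\cpx{X}={}_BB_A$ and $\RHom_A(\cpx{Y},A)=\RHom_A(B,A)$.

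Part~(1), however, has a genuine gap. You claim that $\mathrm{Cone}(f)\in\Kb{\pmodcat{A^e}}$ alone forces ${}_BB\otimesL_A-$ to be a singular \emph{equivalence}, but this is false. Take $A=k$ and $B$ any finite-dimensional $k$-algebra of infinite global dimension (say $B=k[x]/(x^2)$). Then $A^e=k$, so $\mathrm{Cone}(f)\in\Kb{\pmodcat{A^e}}$ trivially, and $\projdim_AB=\projdim B_A=0$; yet $\Dsg{A}=0$ while $\Dsg{B}\neq 0$, so there is no singular equivalence. Your triangle argument breaks down because tensoring $A\to B\to\mathrm{Cone}(f)$ on the right over $A$ with $B$ produces a triangle of $A$-$B$-bimodules, not of $B^e$-modules; the object $\mathrm{Cone}(f)\otimesL_AB$ is not even a $B^e$-complex, and the map you obtain is the unit $B\to B\otimesL_AB$, not the multiplication $B\otimesL_AB\to B$ whose cone you would need to control in $\Dsg{B^e}$.

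The fix is exactly what the paper does: under $\mathrm{Cone}(f)\in\Kb{\pmodcat{A^e}}$ one only gets that ${}_BB\otimesL_A-$ is \emph{fully faithful} on singularity categories (this is \cite[Proposition~3.7]{Oppermann2019aa}), not an equivalence. Since $\projdim B_A<\infty$ gives a left adjoint of ${}_BB\otimesL_A-$, and $\projdim{}_AB<\infty$ makes the right adjoint ${}_AB\otimesL_B-$ preserve $\Kb{\sf Proj}$, one then applies Corollary~\ref{cd1} (which only needs full faithfulness) rather than Theorem~\ref{atar} (which needs an equivalence).
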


\begin{proof}
(1) Since $\projdim B_A<\infty$, it follows from \cite[Lemma 2.8]{Angeleri-Hugel2017aa}
that the functor ${}_BB\otimesL_A-$ in  diagram {\rm (\ref{ad1})}
has a left adjoint. Moreover, $\projdim_AB<\infty$ implies that ${}_AB\otimesL_B-$
preserves $\Kb{\sf Proj}$, and ${\rm Cone}(f)\in \Kb{\pmodcat{A^e}}$ yields that
${}_BB\otimesL_A-$ induces a fully faithful functor between the corresponding singularity
categories, see \cite[Proposition 3.7]{Oppermann2019aa}.
Now the statement (1) follows immediately from Corollary~\ref{cd1}.

(2) Since ${\rm Cone}(f)\in \Kb{\pmodcat{A^e}}$ and $f$
is a homological epimorphism, we infer that
 ${}_BB\otimesL_A-$ and ${}_AB\otimesL_B-$ induce mutual equivalences between the singularity categories.
Now we finish our proof by Theorem~\ref{atar}.
\end{proof}

 A special case of algebra homomorphism is the canonical map from an algebra $A$ to its quotient $A/I$ for some ideal $I$.
Applying Corollary~\ref{cor-ring-hom}, we get the following results:
\begin{enumerate}
	\item[{$(1)$}.] If $I$ has finite projective dimension as an $A$-$A$-bimodule, then
ARC holds for $A/I$ implies that it holds for $A$. Indeed,
$\projdim_{A^e}I<\infty$ yields that $\projdim _A(A/I)<\infty$ and $\projdim (A/I)_A<\infty$.
Moreover, it is clear that the cone of
$A\rightarrow A/I$ is $I[1]$, which belongs to $\Kb{\pmodcat{A^e}}$ by assumption. So, the statement follows from
Corollary~\ref{cor-ring-hom} (1).

    \item[{$(2)$}.] A special case of (1) occurs when $I\cong M\otimes _k N$, where
$M$ and $N$ are left and right $A$-modules, respectively, both have finite projective dimension.
Another example is that $I$ is a $1$-dimensional ideal
which has finite projective dimension as left and as right module.
We refer to \cite[Corollary 3.10 and 3.11]{Oppermann2019aa} for more
explanations.

    \item[{$(3)$}.] In general, it is difficult to check
whether the condition $\RHom_A(B, A)\in \Kb{\pmodcat{B}}$
in Corollary~\ref{cor-ring-hom} (2) is satisfied. However,
it is the case when $A$ is a Gorenstein algebra.
In fact, assume all conditions in Corollary~\ref{cor-ring-hom} are satisfied,
except for $\RHom_A(B, A)\in \Kb{\pmodcat{B}}$.
Then it follows from the proof of Theorem~\ref{atar} that there are
adjoint functors
\[\xymatrix{
	\D{\Modcat{B}}\ar[rr]|{i_*=B\otimesL_B-}\ar@/_2pc/[rr]\ar@/^2pc/[rr]&&\D{\Modcat{A}}\ar@/^1pc/[ll]|{i^!}
\ar@/_1pc/[ll]|{i^*=B\otimesL_A-}&& }\]
where the functor $i_*$ is fully faithful.
Therefore, we get $i^*i_*\cong 1_B$ and $i^!i_*\cong 1_B$,
and thus $B\cong i^!i_*B\in i^!(\Kb{\pmodcat{A}})=i^!(\Kb{\imodcat{A}})$, which is contained in $\Kb{\imodcat{B}}$
by \cite[Lemma 1]{Qin16}. As a result, we obtain $\injdim _BB<\infty$, and dually,
we also have $\projdim _B\Hom _k(B,k)<\infty$, that is, $B$ is also a Gorenstein algebra.
Therefore, $\RHom
_A(B,A)=i^!A\in i^!(\Kb{\imodcat{A}})\subseteq \Kb{\imodcat{B}}=\Kb{\pmodcat{B}}$.

\end{enumerate}

%

\section{Examples}

In this section, we illustrate our results by some examples.

\begin{exm} (Tiled orders) Let $A$ be a finite dimensional algebra, $I_{i, j}$ be an ideal of $A$. Let us consider tiled triangular rings, i.e., rings of the form\\
$$\Delta=\small
\begin{pmatrix}
A&I_{1,2}&\cdots&I_{1,n}\\
A&A&&\vdots\\
\vdots&&\ddots&I_{n-1,n}\\
A&\cdots&\cdots&A
 \end{pmatrix}.$$

 \begin{prop}

Assume that $\projdim({}_AI_{1,i})<\infty, \gldim(A/I_{i-1, i})<\infty$ for $i=2, \cdots, n$, and $A$ satisfies ARC. Then $\Delta$ satisfies ARC.
  \end{prop}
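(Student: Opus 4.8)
The plan is to prove the statement by induction on $n$, at each stage splitting off the first row and column of $\Delta$; this mirrors the strategy behind Corollary~\ref{tri-alg}, except that a tiled order is not a triangular matrix algebra, so one works with a recollement coming from an idempotent rather than with a direct product decomposition. For $n=1$ one has $\Delta=A$ and there is nothing to prove. For the inductive step set $e=E_{11}$ and $f=1-e=E_{22}+\dots+E_{nn}$ (matrix units); then $e\Delta e\cong A$, and $\Delta':=f\Delta f$ is the tiled order of size $n-1$ with diagonal $A$ and ideals $I_{i,j}$ for $2\le i<j\le n$. A short computation, using only that $I_{1,j}$ is decreasing in $j$ (part of being a tiled order), shows $\Delta/\Delta f\Delta\cong A/I_{1,2}$, which has finite global dimension by hypothesis. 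One then checks that the idempotent $f$ gives a recollement of derived module categories relating $\D{\Modcat{A/I_{1,2}}}$, $\D{\Modcat{\Delta}}$ and $\D{\Modcat{\Delta'}}$: indeed $f\Delta$ is projective as a left $\Delta'$-module (as such it is the direct sum of $\Delta'$ and the first column of $\Delta$ in rows $2,\dots,n$, which is again a projective column of $\Delta'$ since those entries are copies of $A$), whence $\Tor^{\Delta'}_{>0}(\Delta f,f\Delta)=0$ and the multiplication map $\Delta f\otimes_{\Delta'}f\Delta\to\Delta f\Delta$ is an isomorphism.

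The second ingredient is that $\Delta'$ again satisfies the hypotheses of the proposition (with the same $A$), so that the inductive hypothesis applies to it. The conditions $\gldim(A/I_{i-1,i})<\infty$ for $i=3,\dots,n$ and ``$A$ satisfies ARC'' are inherited verbatim, so the only point is $\projdim({}_AI_{2,i})<\infty$ for $i=3,\dots,n$. For this I would use the exact sequence $0\to I_{1,i}\to I_{2,i}\to I_{2,i}/I_{1,i}\to 0$: one has $I_{1,i}\subseteq I_{2,i}$, and $I_{2,i}/I_{1,i}$ is annihilated on the left by $I_{1,2}$ since $I_{1,2}I_{2,i}\subseteq I_{1,i}$ inside $\Delta$, hence is an $A/I_{1,2}$-module; since $\projdim_A(A/I_{1,2})<\infty$ (from $\projdim({}_AI_{1,2})<\infty$) together with $\gldim(A/I_{1,2})<\infty$ forces every $A/I_{1,2}$-module to have finite projective dimension over $A$, we conclude $\projdim({}_AI_{2,i})<\infty$.

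It then remains to upgrade the recollement to a ladder of height $4$: then Theorem~\ref{lad}(2), applied with the finite-global-dimension term $A/I_{1,2}$ in the role of ``$B$'', yields that ARC holds for $\Delta$ if and only if it holds for $\Delta'$, and the induction closes. One direction of this extension is automatic, since $f\Delta$ is a finitely generated projective --- hence compact --- left $\Delta'$-module; the other direction is the delicate point and the main obstacle of the whole argument: it comes down to showing that the right $\Delta'$-module $e\Delta f=(I_{1,2},\dots,I_{1,n})$, i.e.\ the first row of $\Delta$ to the right of the diagonal, has finite projective dimension over $\Delta'$. This is precisely where the hypotheses $\projdim({}_AI_{1,i})<\infty$ enter, and I expect its verification to require a secondary induction: resolving $e\Delta f$ through the projective columns of $\Delta'$ and the $A$-projective resolutions of the $I_{1,i}$, using the finite global dimensions of the $A/I_{i-1,i}$ to control the successive subquotients, as in the previous paragraph. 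Once this is established, $\Delta f\Delta$ is perfect over $\Delta$ in both variables, the ladder extends far enough, and the proof is complete.
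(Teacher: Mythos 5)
Your reduction to a recollement at the idempotent $f=1-E_{11}$ is sound as far as it goes (the identification $\Delta/\Delta f\Delta\cong A/I_{1,2}$, the projectivity of ${}_{\Delta'}f\Delta$, and the inheritance argument giving $\projdim({}_AI_{2,i})<\infty$ are all correct, although the multiplication isomorphism $\Delta f\otimes_{\Delta'}f\Delta\to\Delta f\Delta$ does not follow ``whence'' from projectivity and needs a separate entrywise check). The genuine gap is exactly the step you yourself call the main obstacle, and it is not a routine verification that was merely postponed: to get the direction ARC$(\Delta')\Rightarrow$ARC$(\Delta)$ out of Theorem~\ref{lad}(2) you must produce a ladder of height $4$, and the extensions you discuss require right-sided finiteness conditions that the hypotheses do not provide. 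The upward step needs $\Delta f$, i.e.\ $(I_{1,2},\dots,I_{1,n})$, to be perfect as a \emph{right} $\Delta'$-module; already for $n=2$ this is $\projdim (I_{1,2})_A<\infty$, and the hypotheses $\projdim({}_AI_{1,2})<\infty$, $\gldim(A/I_{1,2})<\infty$ are left-sided and do not control the right $A$-structure of $I_{1,2}$. Moreover, even granting that step, a recollement plus one step down and one step up is only a ladder of height $3$, which (in the shape of Theorem~\ref{lad}) gives the implication in the wrong direction; a further extension, e.g.\ the second downward step, requires $\RHom_{\Delta'}(f\Delta,\Delta')\in\Kb{\pmodcat{\Delta}}$, again a right-sided condition you do not address. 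Compare Example~\ref{hereditary ideals}, where exactly such an extension is bought by the \emph{additional} hypothesis $\injdim eA_A<\infty$; nothing of that sort is available here.

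The paper's proof is arranged precisely to avoid this issue, and is genuinely different from your plan: it first replaces $\Delta$ by the derived-equivalent algebra $\Phi$ of \cite[Proposition 4.14]{Chen2012ah}, which is a triangular matrix algebra $\begin{pmatrix}\Phi_1&0\\ M&A\end{pmatrix}$ with $M=(A/I_{1,2},\dots,A/I_{1,n})$; it then shows $\gldim\Phi_1<\infty$ by an iterated homological-epimorphism/singular-equivalence reduction (this is where all the hypotheses $\gldim(A/I_{i-1,i})<\infty$ are consumed, not just $\gldim(A/I_{1,2})$ at each stage), and finally applies Corollary~\ref{tri-alg}(1). In that corollary the only one-sided finiteness over an algebra of possibly infinite global dimension is $\projdim_AM<\infty$, which is exactly what $\projdim({}_AI_{1,i})<\infty$ gives, while the finiteness on the $\Phi_1$-side is free because $\gldim\Phi_1<\infty$. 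Your direct induction on $\Delta$ itself does not have this luxury, since $\Delta'$ has infinite global dimension in general; so as it stands the proposal has an unproven (and, from the stated hypotheses, doubtful) key claim, and the argument does not close.
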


 \begin{proof}

 As shown in \cite[Proposition 4.14]{Chen2012ah},
the two algebras
$$\Delta=\small\begin{pmatrix}A&I_{1,2}&I_{1,3}&\cdots&I_{1,n}\\
A&A&I_{2,3}&&\vdots\\
\vdots&\vdots&\ddots&\ddots&\vdots\\
\vdots&\vdots&&\ddots&I_{n-1,n}\\
A&A&A&A&A
\end{pmatrix}\quad\mbox{ and }\quad
\Phi=\small\begin{pmatrix}A/I_{1,2}&I_{2,3}/I_{1,3}&\cdots&I_{2,n}/I_{1,n}&0\\
A/I_{1,2}&A/I_{1,3}&&&\vdots\\
\vdots&\vdots&&\ddots&\vdots\\
A/I_{1,2}&A/I_{1,3}&\cdots&A/I_{1,n}&0\\
A/I_{1,2}&A/I_{1,3}&\cdots&A/I_{1,n}&A
\end{pmatrix}$$ are derived equivalent.
Let us denote $\Phi_1:=$ $$\begin{pmatrix}A/I_{1,2}&I_{2,3}/I_{1,3}&\cdots&I_{2,n}/I_{1,n}\\
A/I_{1,2}&A/I_{1,3}&\cdots&I_{3,n}/I_{1,n}\\
\vdots&\vdots&\ddots&\vdots\\
A/I_{1,2}&A/I_{1,3}&\cdots&A/I_{1,n}\end{pmatrix}.$$

Since $\projdim({}_AI_{1,i})<\infty$ for $i=2, \cdots, n$, $(A/I_{1,2}, A/I_{1,3}, \cdots, A/I_{1,n})$ has finite projective dimension as a left $A$-module.

 Now, we want to show that $\Phi_1$ has finite global dimension.
Let $e$ be an idempotent of $\Phi_1$ which has $1$ in the $(1, 1)$-th position and zeros elsewhere. It is immediate that
$$\Phi_1e\Phi_1=\begin{pmatrix}A/I_{1,2}&I_{2,3}/I_{1,3}&\cdots&I_{2, n}/I_{1,n}\\
A/I_{1,2}&I_{2,3}/I_{1,3}&\cdots&I_{2, n}/I_{1,n}\\
\vdots&\vdots&&\vdots\\
A/I_{1,2}&I_{2,3}/I_{1,3}&\cdots&I_{2, n}/I_{1,n}

\end{pmatrix}
$$
which is projective as a right $\Phi_1$-module. Note that $\Delta$ is a ring. Then, by the multiplication of $\Delta$, we have $I_{1,2}I_{2, i}\subseteq I_{1,i}$ for $i=3, \cdots,n$. So, $I_{2, i}/I_{1,i}$ is a left $A/I_{1,2}$-module for $i=3,\cdots, n$. It follows from $\gldim(A/I_{1,2})<\infty$ that $I_{2,i}/I_{1,i}$ has finite projective dimension as a left $A/I_{1,2}$-module for $i=3, \cdots, n$. Hence, $\Phi_1e\Phi_1$ has finite projective dimension as a left $\Phi_1$-module.

It follows from $\Tor^{\Phi_1}_i(\Phi_1/\Phi_1e\Phi_1, \Phi_1/\Phi_1e\Phi_1)=0$ for $i>0$ that the canonical ring homomorphism $\lambda:{}\Phi_1\ra \Phi_1/\Phi_1e\Phi_1$ is a homological epimorphism. Thus, we have an adjoint triple
\[\xymatrix{
	\D{\rm{Mod}\text{-}{\Phi_1/\Phi_1e\Phi_1}}\ar[rr]^{\lambda_*}&&\D{\rm{Mod}\text{-}{\Phi_1}}\ar@/^1pc/[ll]^{\RHom_{\Phi_1}(\Phi_1/\Phi_1e\Phi_1, -)}\ar@/_1pc/[ll]_{
	-\otimesL_{\Phi_1}\Phi_1/\Phi_1e\Phi_1}&& }\]
	where $\lambda_*$ is an embedding, and $-\otimesL_{\Phi_1}\Phi_1/\Phi_1e\Phi_1$ and $\RHom_{\Phi_1}(\Phi_1/\Phi_1e\Phi_1, -)$ are the derived functors of $-\otimes_{\Phi_1}\Phi_1/\Phi_1e\Phi_1$ and $\Hom_{\Phi_1}(\Phi_1/\Phi_1e\Phi_1, -)$, respectively. Note that $\Phi_1e\Phi_1$ is projective as a right $\Phi_1$-module, and has finite projective dimension as a left $\Phi_1$-module. Then, the adjoint triple $(-\otimesL_{\Phi_1}\Phi_1/\Phi_1e\Phi_1, \lambda_*, \RHom_{\Phi_1}(\Phi_1/\Phi_1e\Phi_1, -))$
restricts to $\Db{\rm{mod}}$ and $\Kb{\rm{proj}}$, respectively.

By \cite[Example 5.3.4]{Nicolas2007}, the Verdier localization of $\D{\rm{Mod}\text{-}\Phi_1}$ via the essential image of $\D{\rm{Mod}\text{-}\Phi_1/\Phi_1e\Phi_1}$ under $\lambda_*$ is triangle equivalent to $\rm{Tria}_{\D{\rm{Mod}\text{-}\Phi_1}}(\Phi_1e\Phi_1)$ which is the smallest full triangulated subcategory of $\D{\rm{Mod}\text{-}\Phi_1}$ containing $\Phi_1e\Phi_1$ and closed under small coproducts, and $\rm{Tria_{\D{\rm{Mod}\text{-}\Phi_1}}}(\Phi_1e\Phi_1)$ is triangle equivalent to the category $\D{(\mathcal{C}_{dg}\Phi_1)(\Phi_1e\Phi_1, \Phi_1e\Phi_1)}$ in which $(\mathcal{C}_{dg}\Phi_1)(\Phi_1e\Phi_1, \Phi_1e\Phi_1)$ is a dg algebra.
Since $\Phi_1e\Phi_1$ is a finitely generated projective right $\Phi_1$-module, we have a triangle equivalence between $\D{(\mathcal{C}_{dg}\Phi_1)(\Phi_1e\Phi_1, \Phi_1e\Phi_1)}$ and $\D{\rm{Mod}\text{-}H^0((\mathcal{C}_{dg}\Phi_1)(\Phi_1e\Phi_1, \Phi_1e\Phi_1))}$. And the latter one is triangle equivalent to $\D{\rm{Mod}\text{-}e\Phi_1e}$. Note that $e\Phi_1e=A/I_{1,2}$ and $\gldim(A/I_{1,2})<\infty$. Then, $\Phi_1^{\opp}$ and $\Phi_1^{\opp}/\Phi_1e\Phi_1^{\opp}$ are singularly equivalent, where $\Phi_1^{\opp}$ and $\Phi_1^{\opp}/\Phi_1e\Phi_1^{\opp}$ are the opposite algebras of $\Phi_1$ and $\Phi_1/\Phi_1e\Phi_1$, respectively. Then, $\Phi_1$ has finite global dimension if and only if so does $\Phi_1/\Phi_1e\Phi_1$.

It is clear that
$$\Phi_1/\Phi_1e\Phi_1=\begin{pmatrix}
A/I_{2,3}&I_{3,4}/I_{2,4}&\cdots&I_{3,n}/I_{2,n}\\
A/I_{2,3}&A/I_{2,4}&\cdots&I_{4,n}/I_{2,n}\\
\vdots&\vdots&\ddots&\vdots\\
A/I_{2,3}&A/I_{2,4}&\cdots&A/I_{2,n}
\end{pmatrix}.
$$

We write $\Phi_2$ for $\Phi_1/\Phi_1e\Phi_1$. Recursively, $\Phi_1$ has finite global dimension if and only if so does $A/I_{n-1,n}$. Thus, we have $\gldim(\Phi_1)<\infty$. Hence, by Corollary \ref{tri-alg}, $\Delta$ satisfies ARC if and only if it holds for $A$ since derived equivalences preserve ARC.
\end{proof}

\end{exm}

\begin{exm}\label{one-point extensions}(One-point extensions)
	Let $A$ be an algebra, and let $M$ be a left $A$-module. The one-point extension algebra $A[M]$ is defined to be the triangular matrix algebra
	$$\begin{bmatrix}
	  k &0\\
	  M & A
	\end{bmatrix}$$
If $M$ has finite projective dimension, then ARC holds for $A[M]$ if and only if it holds for $A$. This follows immediately from Corollary \ref{tri-alg}.
\end{exm}

\begin{exm}\label{hereditary ideals}(Quotient algebras)
	Let $A$ be an algebra, and let $e$ be a primitive idempotent in $A$ such that the multiplication map $Ae\otimes_keA\ra AeA$ is an isomorphism and $eAe\simeq k$. The ideal $AeA$ is called a heredity ideal in the literature. If furthermore the injective dimension of the right $A$-module $eA$ is finite, then ARC holds for $A$ if and only if it holds for $A/AeA$.
	
Indeed, it follows from \cite{Cline1996} that $\D{\Modcat{A}}$ admits a recollement
\begin{align}\label{idem}
\xymatrix@!=6pc{ \D{\Modcat{A/AeA}} \ar[r]|{i_*} & \D{\Modcat{A}} \ar @/_1pc/[l]|{i^*}
\ar @/^1pc/[l]|{i^!} \ar[r]|{j^*} & \D{\Modcat{eAe}}
\ar @/_1pc/[l]|{j_!} \ar @/^1pc/[l]|{j_*}}
\end{align}
where $i^*=A/AeA\otimes_A^L-$, $i_*=A/AeA\otimes_{A/AeA}^L-$, $i^!
=\RHom_{A}(A/AeA, -)$,
$j_!=Ae\otimes_{eAe}^L-$, $j^*=eA\otimes_A^L-$ and $j_*=\RHom_{eAe}(eA, -)$.
Clearly, $\gldim eAe<\infty$ implies that $j^*A=eA\in\Kb{\pmodcat{eAe}}$,
and $\injdim eA_A<\infty$ yields that $j_*(eAe)=\RHom_{eAe}(eA, eAe)
=\RHom_{k}(eA,k)\in\Kb{\pmodcat{A}}$. Hence,
by \cite[Lemma 2.5 and Proposition 3.2]{Angeleri-Hugel2017aa},
the recollement {\rm (\ref{idem})} can be extended two steps downwards.
Similarly, by
\cite[Lemma 2.8 and Proposition 3.2]{Angeleri-Hugel2017aa},
$\projdim Ae_{eAe}<\infty$ implies that {\rm (\ref{idem})} can be extended one step upwards.
Therefore, {\rm (\ref{idem})} is completed to a ladder of height $4$,
and by Theorem~\ref{lad}, ARC holds for $A$ if and only if it holds for $A/AeA$.

\end{exm}

\begin{exm}(Derived discrete algebras) From \cite{Voss01}, an algebra
$A$ is said to be {\it derived discrete} provided for
every positive element $\mathbf{d}  \in
K_0(A)^{(\mathbb{Z})}$ there are only finitely many isomorphism
classes of indecomposable objects $X$ in $\mathcal{D}^b(A)$ of
cohomology dimension vector $(\underline{\dim} H^p(X))_{p \in \mathbb{Z}}
= \mathbf{d}$. Now we apply our main theorem
to show that ARC holds for all derived discrete algebras.

From \cite{BGS04,Voss01}, a basic connected derived
discrete algebra $A$ is derived equivalent to either a piecewise hereditary
algebra of Dynkin type, or a bound quiver algebra $\Lambda(r,n,m)$ given by
$$\xymatrix{ &&&& 1 \ar[r]^-{\alpha_{1}}
& \cdots \ar[r]^-{\alpha_{n-r-2}} &  n-r-1 \ar[dr]^-{\alpha_{n-r-1}}
& \\ (-m) \ar[r]^-{\alpha_{-m}} & \cdots \ar[r]^-{\alpha_{-2}} &
(-1) \ar[r]^-{\alpha_{-1}} & 0 \ar[ur]^-{\alpha_0} &&&& n-r
\ar[dl]^-{\alpha_{n-r}} \\ &&&& n-1 \ar[ul]^-{\alpha_{n-1}} & \cdots
\ar[l]^-{\alpha_{n-2}} & n-r+1 \ar[l]^-{\alpha_{n-r+1}} & }$$ with
the relations $\alpha_{n-1}\alpha_0, \alpha_{n-2}\alpha_{n-1},
\cdots , \alpha_{n-r}\alpha_{n-r+1}$, where $1 \leq r \leq
n$ and $m \geq 0$. Clearly, if $\gldim A<\infty$ then ARC always hold true,
and if $\gldim A=\infty$ then $A$ is derived equivalent to $\Lambda(n,n,m)$,
which admits a series of infinite ladders,
see \cite[Lemma 17]{Q16}. Note that the right terms of these ladders
are $k$, and the left can be reduced to $\Lambda(n,n,0)$ consecutively.
Hence, applying Theorem~\ref{lad} shows that ARC holds for $A$ if
and only if it holds for $\Lambda(n,n,0)$, and the latter is known as
$2$-truncated cycle algebra, which is
representation finite and then satisfies ARC by \cite{Auslander1975ae}.

\end{exm}

\vskip 10pt

\noindent {\bf Acknowledgements.}\quad  The paper is started when the authors are visiting University of Stuttgart; they thank Professor Steffen Koenig for his hospitality. This work is supported by National Natural Science
Foundation of China (No.s 11701321, 12031014, 11301398, 12061060 and 11901551).



\begin{thebibliography}{10}
	
	\bibitem{Angeleri-Hugel2017aa}
	\bgroup\scshape{}L.~Angeleri~H\"{u}gel\egroup{},
	\bgroup\scshape{}S.~Koenig\egroup{}, \bgroup\scshape{}Q.~Liu\egroup{}, and
	\bgroup\scshape{}D.~Yang\egroup{}, Ladders and simplicity of derived module
	categories,  \emph{J. Algebra} \textbf{472} (2017), 15--66.
	
	\bibitem{Auslander1975ae}
	\bgroup\scshape{}M.~Auslander\egroup{} and \bgroup\scshape{}I.~Reiten\egroup{},
	On a generalized version of the {N}akayama conjecture,  \emph{Proc. Amer.
		Math. Soc.} \textbf{52} (1975), 69--74.
		
	\bibitem{Avramov2000a}
\bgroup\scshape{}L.~L. Avramov\egroup{} and \bgroup\scshape{}R.-O.
  Buchweitz\egroup{}, Support varieties and cohomology over complete
  intersections,  \emph{Invent. Math.} \textbf{142} no.~2 (2000), 285--318.
	
		
		
		
	
	\bibitem{Beuilinson1982aa}
	\bgroup\scshape{}A.~A. Be{\u\i}linson\egroup{},
	\bgroup\scshape{}J.~Bernstein\egroup{}, and
	\bgroup\scshape{}P.~Deligne\egroup{}, Faisceaux pervers,  in \emph{Analysis
		and topology on singular spaces, {I} ({L}uminy, 1981)}, \emph{Ast{\'e}risque}
	\textbf{100}, Soc. Math. France, Paris, 1982, pp.~5--171.
	
	
	
    \bibitem{BGS04}
    \bgroup\scshape{}G.~Bobi\'{n}ski\egroup{},
    \bgroup\scshape{}C.~Gei{\ss}\egroup{} and \bgroup\scshape{}A.~Skowro\'{n}ski\egroup{},
Classification of discrete derived categories, \emph{ Central Eur. J. Math.} \textbf{2} (2004), 19--49.
	

	
	
	
	\bibitem{Buchweitz1986}
	\bgroup\scshape{}R.-O. Buchweitz\egroup{}, Maximal cohen--macaulay modules and
	tate-cohomology over gorenstein rings,  \emph{unpublished (1986), available
		at http://hdl.handle.net/1807/16682.} (1986).
		
	\bibitem{Carlson1983}
\bgroup\scshape{}J.~F. Carlson\egroup{}, The varieties and the cohomology ring
  of a module,  \emph{J. Algebra} \textbf{85} no.~1 (1983), 104--143.
	
		
		
	
	
	
\bibitem{Chen2020}
\bgroup\scshape{}X.-W. Chen\egroup{},
\bgroup\scshape{}J.~Liu\egroup{}, and
\bgroup\scshape{}R.~Wang\egroup{}, Singular equivalences induced by bimodules and quadratic monomial algebras,  \emph{arXiv:2009.09356}.
	
	\bibitem{Chen2012ah}
	\bgroup\scshape{}Y.~Chen\egroup{}, Derived equivalences between matrix subrings
	and their applications,  \emph{J. Algebra} \textbf{370} (2012), 113--132.

    \bibitem{Cline1996}
    \bgroup\scshape{}E.~Cline\egroup{},
    \bgroup\scshape{}B.~Parshall\egroup{} and
    \bgroup\scshape{}L~Scott\egroup{},
    Stratifying endomorphism algebras, \emph{Mem. Amer. Math. Soc.} \textbf{591} (1996), 1--119.


\bibitem{CF1990}
\bgroup\scshape{}R.~R. Colby\egroup{} and \bgroup\scshape{}K.~R.
  Fuller\egroup{}, A note on the {N}akayama conjectures,  \emph{Tsukuba J.
  Math.} \textbf{14} no.~2 (1990), 343--352.
	
	\bibitem{Dalezios2020}
	\bgroup\scshape{}G.~Dalezios\egroup{}, On singular equivalences of morita type
	with level and gorenstein algebras,  \emph{arXiv:2001.05749}.
	
	\bibitem{Diveris2012aa}
	\bgroup\scshape{}K.~Diveris\egroup{} and \bgroup\scshape{}M.~Purin\egroup{},
	The generalized {A}uslander-{R}eiten condition for the bounded derived
	category,  \emph{Arch. Math. (Basel)} \textbf{98} no.~6 (2012), 507--511.
	
	
	\bibitem{EHTSS2004}
\bgroup\scshape{}K.~Erdmann\egroup{}, \bgroup\scshape{}M.~Holloway\egroup{},
  \bgroup\scshape{}R.~Taillefer\egroup{}, \bgroup\scshape{}S.~N.
  Snashall\egroup{}, and \bgroup\scshape{}O.~y. Solberg\egroup{}, Support
  varieties for selfinjective algebras,  \emph{$K$-Theory} \textbf{33} no.~1
  (2004), 67--87.
	
\bibitem{Friedlander1997b}
\bgroup\scshape{}E.~M. Friedlander\egroup{} and
  \bgroup\scshape{}A.~Suslin\egroup{}, Cohomology of finite group schemes over
  a field,  \emph{Invent. Math.} \textbf{127} no.~2 (1997), 209--270.

	
	
	
	\bibitem{Geigle1991aa}
	\bgroup\scshape{}W.~Geigle\egroup{} and \bgroup\scshape{}H.~Lenzing\egroup{},
	Perpendicular categories with applications to representations and sheaves,
	\emph{J. Algebra} \textbf{144} no.~2 (1991), 273--343.
	
	\bibitem{Happel1988aa}
	\bgroup\scshape{}D.~Happel\egroup{}, \emph{Triangulated categories in the
		representation theory of finite dimensional algebras}, \textbf{119},
	Cambridge University Press, 1988.
	
	\bibitem{Hoshino1984}
\bgroup\scshape{}M.~Hoshino\egroup{}, Modules without self-extensions and
  {N}akayama's conjecture,  \emph{Arch. Math. (Basel)} \textbf{43} no.~6
  (1984), 493--500.	
	
	
	
	\bibitem{Hu2017aa}
	\bgroup\scshape{}W.~Hu\egroup{} and \bgroup\scshape{}S.~Pan\egroup{}, Stable
	functors of derived equivalences and {G}orenstein projective modules,
	\emph{Math. Nachr.} \textbf{290} no.~10 (2017), 1512--1530.

	
	\bibitem{Krause2005ab}
	\bgroup\scshape{}H.~Krause\egroup{}, The stable derived category of a
	{N}oetherian scheme,  \emph{Compos. Math.} \textbf{141} no.~5 (2005),
	1128--1162.
	
	\bibitem{Liu2015aa}
	\bgroup\scshape{}P.~Liu\egroup{} and \bgroup\scshape{}M.~Lu\egroup{},
	Recollements of singularity categories and monomorphism categories,
	\emph{Comm. Algebra} \textbf{43} no.~6 (2015), 2443--2456.
	
	\bibitem{Luo2008aa}
	\bgroup\scshape{}R.~Luo\egroup{} and \bgroup\scshape{}Z.~Huang\egroup{}, When
	are torsionless modules projective?,  \emph{J. Algebra} \textbf{320} no.~5
	(2008), 2156--2164.
	
	\bibitem{Muller1968aa}
	\bgroup\scshape{}B.~J. M\"{u}ller\egroup{}, Dominant dimension of semi-primary
	rings,  \emph{J. Reine Angew. Math.} \textbf{232} (1968), 173--179.
	
	\bibitem{Neeman1996ab}
	\bgroup\scshape{}A.~Neeman\egroup{}, The {G}rothendieck duality theorem via
	{B}ousfield's techniques and {B}rown representability,  \emph{J. Amer. Math.
		Soc.} \textbf{9} no.~1 (1996), 205--236.
	
		\bibitem{Nicolas2007}
	\bgroup\scshape{}P.~Nicol\'{a}s\egroup{}, On torsion torsionfree triples,  \emph{Ph.D thesis, Univ. Murcia}, (2007), \emph{arXiv:0801.0507}.
	
	\bibitem{Oppermann2010}
\bgroup\scshape{}S.~Oppermann\egroup{}, Hochschild cohomology and homology of
  quantum complete intersections,  \emph{Algebra Number Theory} \textbf{4}
  no.~7 (2010), 821--838.	
	
	\bibitem{Oppermann2019aa}
	\bgroup\scshape{}S.~Oppermann\egroup{},
	\bgroup\scshape{}C.~Psaroudakis\egroup{}, and
	\bgroup\scshape{}T.~Stai\egroup{}, Change of rings and singularity
	categories,  \emph{Adv. Math.} \textbf{350} (2019), 190--241.

	
	\bibitem{Orlov2004aa}
	\bgroup\scshape{}D.~O. Orlov\egroup{}, Triangulated categories of singularities
	and {D}-branes in {L}andau-{G}inzburg models,  \emph{Tr. Mat. Inst. Steklova}
	\textbf{246} no.~Algebr. Geom. Metody, Svyazi i Prilozh. (2004), 240--262.

	\bibitem{Pan2013ac}
	\bgroup\scshape{}S.~Pan\egroup{}, Generalized {A}uslander-{R}eiten conjecture
	and derived equivalences,  \emph{Comm. Algebra} \textbf{41} no.~10 (2013),
	3695--3704.

    \bibitem{Qin16}
	\bgroup\scshape{}Y.~Qin\egroup{} and
	\bgroup\scshape{}Y.~Han\egroup{},
    Reducing homological conjectures by n-recollements, \emph{Algebr. Represent. Theory}
    \textbf{19} (2016), 377--395.



    \bibitem{Q16} \bgroup\scshape{}Y.~Qin\egroup{},
    Jordan-H\"{o}lder theorems for derived categories
of derived discrete algebras, \emph{J. Algebra} \textbf{461} (2016), 295--313.





	
	
	 \bibitem{Voss01}
    \bgroup\scshape{}D.~Vossieck\egroup{}, The algebras with discrete derived
     category, \emph{J. Algebra} \textbf{243} (2001), 168--176.	
	
	\bibitem{Wang2015aa}
	\bgroup\scshape{}Z.~Wang\egroup{}, Singular equivalence of {M}orita type with
	level,  \emph{J. Algebra} \textbf{439} (2015), 245--269.
	
	
	\bibitem{Wei2012aa}
	\bgroup\scshape{}J.~Wei\egroup{}, Tilting complexes and {A}uslander-{R}eiten
	conjecture,  \emph{Math. Z.} \textbf{272} no.~1-2 (2012), 431--441.
	
	\bibitem{Xu2013a}
\bgroup\scshape{}D.~M. Xu\egroup{}, A note on the {A}uslander-{R}eiten
  conjecture,  \emph{Acta Math. Sin. (Engl. Ser.)} \textbf{29} no.~10 (2013),
  1993--1996.

\bibitem{Xu2015}
\bgroup\scshape{}D.~Xu\egroup{}, Auslander-{R}eiten conjecture and special
  biserial algebras,  \emph{Arch. Math. (Basel)} \textbf{105} no.~1 (2015),
  13--22.	

  \bibitem{Yamagata1996}
\bgroup\scshape{}K.~Yamagata\egroup{}, Frobenius algebras,  in \emph{Handbook
  of algebra, {V}ol.\ 1}, North-Holland, Amsterdam, 1996, pp.~841--887.

	
	
	
	\bibitem{Zhou2013aa}
	\bgroup\scshape{}G.~Zhou\egroup{} and \bgroup\scshape{}A.~Zimmermann\egroup{},
	On singular equivalences of {M}orita type,  \emph{J. Algebra} \textbf{385}
	(2013), 64--79.
	
\end{thebibliography}

\vskip 10pt
{\footnotesize
\noindent Yiping Chen

\medskip
\noindent
	School of Mathematics and Statistics, Wuhan University, Wuhan, 430072, China

	\medskip
\noindent
Hubei Key Laboratory of Computational Science (Wuhan University), Wuhan, Hubei, 430072, P. R. China

\noindent
Email: ypchen@whu.edu.cn
}

\bigskip
{\footnotesize \noindent Wei Hu
	
\medskip
\noindent	
School of Mathematical Sciences, Laboratory of Mathematics and Complex Systems, MOE, Beijing Normal University, 100875 Beijing, China

\noindent
Email: huwei@bnu.edu.cn
}

\bigskip

\footnotesize\noindent
Yongyun Qin

\medskip
\noindent
 College of Mathematics and Statistics,
Qujing Normal University, \\ Qujing, Yunnan 655011, China.

\medskip
\noindent
E-mail:
qinyongyun2006@126.com

\bigskip
 {\footnotesize \noindent Ren Wang
 	
 \medskip
 \noindent
 School of Mathematical Sciences, Key Laboratory of Wu Wen-Tsun Mathematics, University of Science and Technology of China, Hefei 230026, China

\noindent
Email: renw@mail.ustc.edu.cn
}

\end{document}